\newcommand{\E}{\mathbb{E}}
\newcommand{\N}{\mathbb{N}}
\renewcommand{\P}{\mathbb{P}}
\newcommand{\R}{\mathbb{R}}
\renewcommand{\S}{\mathbb{S}}
\newcommand{\NN}{N^{-\frac{2}{n-1}}}
\newcommand{\cK}{\mathcal{K}}
\DeclareMathOperator{\vol}{Vol}
\DeclareMathOperator{\Gr}{Gr}
\DeclareMathOperator{\dist}{dist}
\DeclareMathOperator{\interior}{int}
\newcommand{\dint}{\mathrm{d}}
\renewcommand{\phi}{\varphi}
\newtheorem {theorem}{Theorem}[section]
\newtheorem {proposition}[theorem]{Proposition}
\newtheorem {lemma}[theorem]{Lemma}
\newtheorem {question}[theorem]{Open Question}
\theoremstyle{definition}
\newtheorem {definition}{Definition}[section]
\newtheorem {remark}[theorem]{Remark}
\newcommand{\bbinom}[2]{\left[\!\!\begin{array}{c}#1\\#2\end{array}\!\!\right]}
\newcommand{\tbbinom}[2]{\scalebox{0.75}{$\left[\!\!\renewcommand{\arraystretch}{0.6}\begin{array}{c}#1\\#2\end{array}\!\!\right]$}}
\DeclareMathSymbol{\widetildesym}{\mathord}{largesymbols}{"65}
\title[An intrinsic volume metric for the class of convex bodies in $\mathbb{R}^n$]{An intrinsic volume metric\\ for the class of convex bodies in $\mathbb{R}^n$}
\author{Florian Besau and Steven Hoehner}
\date{\today}
\begin{document}

\setcounter{footnote}{0}
\maketitle

\begin{abstract}\noindent
A new intrinsic volume metric is introduced for the class of convex bodies in $\mathbb{R}^n$. As an application, an  inequality is proved for the asymptotic best approximation of the Euclidean unit ball by arbitrarily positioned polytopes with a restricted number of vertices under this metric. This result improves the best known estimate, and shows that dropping the restriction that the polytope is contained in the ball or vice versa improves the estimate by at least a factor of dimension. The same phenomenon has already been observed in the special cases of volume, surface area and mean width approximation of the ball.
\end{abstract}

\renewcommand{\thefootnote}{}
\footnotetext{2020 \emph{Mathematics Subject Classification}: 52A20, 52A22, 52A27 (52A39, 52B11)}

\footnotetext{\emph{Key words and phrases}: approximation, convex body, intrinsic volume, metric, polytope, quermassintegral}
\renewcommand{\thefootnote}{\arabic{footnote}}
\setcounter{footnote}{0}

\section{Introduction and main results}

The intrinsic volumes  $V_0(K),V_1(K),\ldots,V_n(K)$ of a convex body $K$ in $\R^n$ are defined as the coefficients in \emph{Steiner's formula} for the volume of the outer parallel body
\begin{equation}
    \vol_n(K+\varepsilon B_n)=\sum_{j=0}^n \varepsilon^{n-j}\vol_{n-j}(B_{n-j})V_{j}(K) \qquad \forall \varepsilon\geq 0,
\end{equation}
where $B_m$ denotes the $m$-dimensional Euclidean unit ball centered at the origin $o$ and $K+\varepsilon B_n=\{x+\varepsilon y: x\in K, y\in B_n\}$. \emph{Kubota's integral formula} provides an explicit representation of the intrinsic volumes in terms of the volumes of  projections of the body, namely, for $j\in[n]=\{1,\ldots,n\}$,
\begin{equation}\label{kubota}
    V_j(K)=\bbinom{n}{j}
        \int_{\Gr(n,j)}\vol_j(K|H)\,\dint \nu_j(H),
\end{equation}
where $\Gr(n,j)$ is the Grassmannian of all $j$-dimensional subspaces of $\R^n$, $\nu_j$ is the (uniquely
determined) Haar probability measure on $\Gr(n,j)$,  $K|H$ denotes the orthogonal projection of $K$ into the subspace  $H\in\Gr(n,j)$, and
$\tbbinom{n}{j}$ is the \emph{flag coefficient} defined by Klain and Rota in \cite[p. 63]{Klain-Rota}, and appearing in \eqref{ball-binom} below. A celebrated result from integral geometry, Hadwiger's Characterization Theorem \cite{Hadwiger1957}, states that the intrinsic volumes span the space of all continuous rigid motion invariant valuations on  the convex bodies in $\R^n$. They are normalized so that if $L$ is a $j$-dimensional convex body, then $V_j(L)$ is the $j$-dimensional Lebesgue measure of $L$. In particular,  $V_n(K)=\vol_n(K)$ is the $n$-dimensional volume of $K$, $V_{n-1}(K)$ is half the surface area of $K$, $V_1(K)$ is a constant multiple of the mean width of $K$ and $V_0(K)=1$ is the Euler characteristic. For more background on intrinsic volumes and valuations, we refer the reader to Chapters 4 and 6 in the book \cite{SchneiderBook} by Schneider. 

\medskip
A fundamental question is to compare the intrinsic volumes of a pair of convex bodies, and for this purpose, several notions of intrinsic volume ``distance" have been introduced and studied in the literature. 
For convex bodies $K$ and $L$ in $\R^n$ and $j\in[n]$, Florian \cite{Florian-1989} defined (a constant multiple of) the following intrinsic volume ``distance"\!,
\begin{equation*}
	\rho_j(K,L) :=2V_j([K,L])-V_j(K)-V_j(L)
\end{equation*}
where $[K,L]$ denotes the convex hull of $K\cup L$. It was  shown in \cite{Florian-1989} that $\rho_j$ is a metric for $j=1$ but is not a metric for $j\in\{2,\ldots,n\}$; we thus call $\rho_1$ the \emph{mean width metric}.
Later, the \emph{$j$th intrinsic volume deviation} $\Delta_j$ was defined in \cite{BHK}  by
\begin{equation}\label{int-vol-dev-def}
    \Delta_j(K,L):=V_j(K)+V_j(L)-2V_j(K\cap L)
\end{equation}
and was used to study the asymptotic best and random approximations of convex bodies by polytopes. In particular, $\Delta_n$ is the symmetric difference metric and $\Delta_{n-1}$ is half of the surface area deviation (see \cite{Groemer2000,ShephardWebster-1965}). It was shown in \cite[Lem. 24]{BHK} that $\Delta_j$ does not satisfy the triangle inequality if $j\in[n-1]$; hence it is a metric only for $j=n$. 

For each $j\in[n]$, the functionals $\rho_j$ and $\Delta_j$ are positive definite in the sense that if $K$ and $L$ are convex compact sets with $\dim(K)\geq j$ and $\dim(L)\geq j$, then $\rho_j(K,L)=0$ implies $K=L$ and $\Delta_j(K,L)=0$ implies $K=L$. Therefore,  $\rho_j$ and $\Delta_j$ may be considered as measures of deviation between convex bodies, and they have appeared in the literature before in this context (see, for example, \cite{BHK}). To our knowledge, a definition has not yet been put forth for an intrinsic volume metric which satisfies the triangle inequality for every $j\in[n]$. In view of the Kubota formula, we make the following

\begin{definition}\label{mainDefn}
For convex bodies $K$ and $L$ in $\R^n$ and $j\in[n]$, we define the $j$th \emph{intrinsic volume metric} $\delta_j$ by
\begin{equation}\label{def:metric}
    \delta_j(K,L) := \bbinom{n}{j} \int_{\Gr(n,j)} \vol_j((K|H)\triangle (L|H))\, \dint\nu_j(H),
\end{equation}
where $\tbbinom{n}{j}$ is the flag coefficient defined  in  \eqref{ball-binom} and $\triangle$ denotes the symmetric difference of sets.
\end{definition}

This quantity may be thought of as the mean distance of the shadows of $K$ and $L$, averaged over all $j$-dimensional subspaces. Note that $\delta_j(K,L)=\tbbinom{n}{j}\int_{\Gr(n,j)}\Delta_j(K|H,L|H)\,\dint \nu_j(H)$, where $\Delta_j$ is the symmetric difference metric in the $j$-dimensional subspace $H$;  in particular,  $\delta_n(K,L)=\vol_n(K\triangle L)$. In Theorem \ref{metricthm}, we show that $\delta_j$ is a (pseudo-)metric on $\cK^n$ for all $j\in[n]$.

\vspace{2mm}

In the special case $K\subset L$, all three notions of distance reduce to the \emph{$j$th intrinsic volume difference} of $K$ and $L$, 
\begin{equation*}
    \delta_j(K,L)=\rho_j(K,L)=\Delta_j(K,L)=V_j(L)-V_j(K),
\end{equation*}
which has been studied in the best and random approximations of convex bodies by polytopes; see, for example, \cite{Affentranger1991, BFV2010, BHK, BFH2013, BHH2008, glasgrub,Reitzner2002} and the references therein. In the special case of the Euclidean ball, precise estimates for the asymptotic \emph{best} approximation by inscribed polytopes with $N$ vertices (or circumscribed polytopes with $N$ facets) were given in \cite[Thm. 1, i)]{BHK}. There  it was shown that for any $j\in[n]$ there exist positive absolute constants $c_1,c_2$ such that for all sufficiently large $N$, there exists a polytope $P_{n,j,N}\subset B_n$ with at most $N$ vertices (respectively, $P_{n,j,N}\supset B_n$ with at most $N$ facets) which satisfies 
\begin{equation}\label{best-inscribed}
    c_1 jV_j(B_n)\NN \leq \Delta_j(B_n,P_{n,j,N}) \leq c_2 jV_j(B_n)\NN.
\end{equation}
Furthermore, it was shown in \cite[Rmk. 2]{BHK} that $c_1\sim c_2=\frac{1}{2}+O\left(\frac{\ln n}{n}\right)$ as $n\to\infty$.

One may also drop the containment restriction and consider approximation by \emph{arbitrarily positioned polytopes}. This yields an estimate which is at least as good as the best inscribed (or circumscribed) approximation, and it is interesting to ask by how much. Specifically, it was shown in \cite[Thm. 1 iii)]{BHK} that there exists a positive absolute constant $C$ such that for all sufficiently large $N$, there exists an arbitrarily positioned polytope $Q_{n,j,N}$ with at most $N$ vertices such that 
\begin{equation}\label{BHK-upper-bd}
    \Delta_j(B_n,Q_{n,j,N}) \leq C\min\left\{1,\frac{j\ln n}{n}\right\} V_j(B_n)\NN.
\end{equation}
Comparing \eqref{best-inscribed} and \eqref{BHK-upper-bd}, we see that in the special cases of the symmetric difference metric ($j=n$) and surface area deviation ($j=n-1$), dropping the restriction that the ball contains the polytope (or vice versa) improves the estimate by at least a  factor of dimension (see \cite{BHK,  GTW2021, GW2018, HSW, Kur2017, LSW} and Remark \ref{rmk-compare-best} below). The same phenomenon has also been observed for the mean width metric $\rho_1$ \cite{BHK, glasgrub,HK-DCG,Kur2017, Ludwig1999,LSW}.

By  definition of the orthogonal projection, for any convex bodies $K$ and $L$ with $K\cap L\neq\varnothing$ and any $H\in\Gr(n,j)$, it holds that $(K\cup L)|H=(K|H)\cup (L|H)$ and $(K\cap L)|H\subset (K|H)\cap (L|H)$.  Hence, $\delta_j\leq \Delta_j$. From this inequality and the preceding discussion, it is natural to ask:
\begin{quote}
    \emph{How much can we improve the  upper bound \eqref{BHK-upper-bd} for the approximation if
    we measure the distance by $\delta_j$ instead of $\Delta_j$?}
\end{quote}
\vspace{1mm}
\noindent We give an answer in our main result.

\begin{theorem}\label{mainThm}
There exists an absolute constant $C$ such that for every $n\in\mathbb{N}$ with $n\geq 2$ and every $j\in[n]$, there exists an integer $N_{n,j}$ such that for all $N\geq N_{n,j}$, there exists a polytope $Q_{n,j,N}^*$ with at most $N$ vertices which satisfies
\begin{equation}\label{main-result}
    \delta_j(B_n,Q_{n,j,N}^*) \leq C\frac{j}{n}V_j(B_n)\NN.
\end{equation}
\end{theorem}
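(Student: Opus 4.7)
My plan is to take $Q^*_{n,j,N}$ to be a small homothetic dilation $(1+t_N)P_N$ of a near-optimal inscribed polytope and to work directly with the integral-geometric structure of Definition~\ref{mainDefn}, rather than pass through the weaker inequality $\delta_j\le\Delta_j$, which would only reproduce \eqref{BHK-upper-bd} and retain the spurious factor of $\ln n$.

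Concretely, invoke \cite[Thm.~1, i)]{BHK} to fix a polytope $P_N\subset B_n$ with at most $N$ vertices on $S^{n-1}$ and $V_j(B_n)-V_j(P_N)\le c_2 j\,V_j(B_n)\NN$, and set $Q^*_{n,j,N}:=(1+t_N)P_N$ where $t_N=\Theta(\NN)$ is to be optimized. For each $H\in\Gr(n,j)$, write $A_H:=P_N|H\subset B_j$; since $B_j$ and $(1+t_N)A_H$ are both star bodies with origin in the interior, polar coordinates give
\begin{equation*}
\vol_j\bigl(B_j\triangle (1+t_N)A_H\bigr)=\frac{1}{j}\int_{S^{n-1}\cap H}\bigl|1-(1+t_N)^j\rho_{A_H}(u)^j\bigr|\,d\sigma_H(u).
\end{equation*}
Substituting into Definition~\ref{mainDefn} and disintegrating pairs $(H,u)$ with $u\in H\cap S^{n-1}$ into $u\in S^{n-1}$ together with $H\in\Gr(n,j)$ containing $u$, the problem reduces to a pointwise-in-$u$ bound on
\begin{equation*}
I(u):=\int_{\{H\in\Gr(n,j):\,u\in H\}}\bigl|1-(1+t_N)^j\rho_{A_H}(u)^j\bigr|\,d\bar\nu(H),
\end{equation*}
where $\bar\nu$ is the invariant probability measure on $j$-subspaces containing $u$.

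The heart of the argument is the bound $I(u)\le C\,n^{-1}\NN$, uniformly in $u\in S^{n-1}$. Two ingredients enter: $\rho_{A_H}(u)\le 1$ (since $P_N\subset B_n$), and the typical shadow defect $1-\rho_{A_H}(u)$ — the depth of the cap of $B_j$ cut off by $\partial A_H$ in direction $u$ — has average of order $\NN$ as $H$ varies over subspaces containing $u$. Calibrating $t_N$ so that the \emph{cap} contribution ($(1+t_N)\rho<1$, producing $B_j\setminus(1+t_N)A_H$) and the \emph{spike} contribution ($(1+t_N)\rho>1$, producing $(1+t_N)A_H\setminus B_j$) cancel to first order reduces $I(u)$ to the mean cap depth over $\{H\ni u\}$, which is an integral over a $(j-1)$-dimensional subgrassmannian of $u^\perp\cong\R^{n-1}$. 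A Kubota-type application of \eqref{kubota} in $u^\perp$ then produces the factor $1/n$.

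The principal obstacle is precisely this cancellation step. A naive triangle-inequality bound $|1-(1+t_N)^j\rho^j|\le (1-\rho^j)+((1+t_N)^j-1)\rho^j$ on the integrand only reproduces the rate of the best inscribed approximation and extracts nothing from the dilation. Capturing the sharp $1/n$ requires exploiting the signed character of $1-(1+t_N)^j\rho^j$ and matching the two one-sided contributions — exactly the feature absent from the analysis of $\Delta_j$ in \cite[Thm.~1 iii)]{BHK}, where caps and spikes must be summed in absolute value. Once the estimate $I(u)\le C\,n^{-1}\NN$ is established, integrating over $u\in S^{n-1}$ and repackaging via the normalization in \eqref{kubota} yields \eqref{main-result}.
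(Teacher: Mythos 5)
Your outline is structurally parallel to the paper's in one respect — both pass through a dilation $(1+t_N)P$ of a polytope whose vertices lie on $\S^{n-1}$, and both use polar coordinates on the projected shadows — but it departs from the paper at exactly the step you flag as the ``heart of the argument,'' and that step is a genuine gap, not a gap you have sketched how to fill.

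The paper does \emph{not} take a deterministic near-optimal inscribed polytope from \cite[Thm.~1 i)]{BHK}. It takes the convex hull $P^{\rm unif}_{n,N}$ of $N$ i.i.d.\ points \emph{uniform} on $\S^{n-1}$, and the entire mechanism for extracting the $j/n$ factor rests on that randomization. Lemma \ref{KTT-proj1} gives the precise law of the projection of a beta point onto a $j$-plane (another beta law, with parameter $\beta+\frac{n-j}{2}$); consequently $P^{\rm unif}_{n,N}|H$ is a random beta polytope $P^{(n-j-2)/2}_{j,N}$ in every $H$, and Lemma \ref{weak-dist-lem} collapses the Grassmannian integral in $\delta_j$ to a single scalar: $\E[\delta_j(P^{\rm unif}_{n,N}, rB_n)] = \tbbinom{n}{j}\E[\vol_j(P^{\beta}_{j,N}\triangle rB_j)]$. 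The $j/n$ then emerges from an \emph{explicit} moment computation (affine Blaschke--Petkantschin via Lemma \ref{BPext}, the second-moment formula of Lemma \ref{2nd-moment}, the cap estimates of Lemmas \ref{caps-lem}--\ref{beta-caps-lem}), with the global calibration $t_{n,N,\beta}$ chosen so that $\vol_n(B_n\setminus tB_n) = \E[\vol_n(B_n\setminus P^{\beta}_{n,N})]$; plugging $\beta=\frac{n-j-2}{2}$ into the bound $\frac{2n}{n+2\beta+1}$ of Theorem \ref{beta-LSW} yields $\frac{2j}{n-1}$.

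In your version you fix a deterministic $P_N$ about which \cite{BHK} gives you only a single $L^1$ fact: the Kubota average of the shadow defect $1-\rho_{P_N|H}(u)$ over $(H,u)$ is $O(jN^{-2/(n-1)})$. That controls the \emph{mean} of the cap depth, but the quantity you must bound, $I(u)=\int_{\{H\ni u\}}|1-(1+t_N)^j\rho_{A_H}(u)^j|\,d\bar\nu(H)$, contains an absolute value, so there is no cancellation \emph{inside} $I(u)$. What you actually need is a statement about the \emph{mean absolute deviation} of $\rho_{A_H}(u)$ around its median as $H$ ranges over $j$-planes through $u$ — a distributional concentration claim that \cite[Thm.~1 i)]{BHK} does not provide and that has no reason to hold uniformly in $u$ for an arbitrary minimizer. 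You also assert that a ``Kubota-type application in $u^\perp$'' produces the extra factor $1/n$, but you do not indicate which formula this is or why a flag coefficient in $u^\perp\cong\R^{n-1}$ would yield exactly $1/n$ rather than, say, a ratio of surface areas that contributes only $1+O(\ln n/n)$ (cf.\ the computation of $d_{n,\beta}^{2/(n+2\beta+1)}$ in Appendix \ref{sec-appendix}, which shows such Grassmannian normalizations are $1+o(1)$, not $1/n$). The $1/n$ in the paper is not a Grassmannian normalization at all; it is the $\frac{n+2\beta+1}{n+2\beta+3}$-type ratio appearing in the expected beta-polytope defect $A_{n,\beta}$ and in the explicit bound of Theorem \ref{beta-LSW}. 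Without a concrete mechanism replacing the random-beta-polytope moment computation, the key estimate $I(u)\leq Cn^{-1}N^{-2/(n-1)}$ remains unjustified, and the argument does not close.
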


\begin{remark}
More specifically, under the hypotheses of Theorem \ref{mainThm}, the following estimate for the constant $C$ is achieved:
\begin{align}\label{main-ineq}
C = 2 + O\left(\tfrac{\ln n}{n}\right) \qquad \text{as $n\to\infty$}.
\end{align}
\end{remark}

\begin{remark}
Let $n,N\in\mathbb{N}$ with $n\geq 2$ and $N\geq n+1$. It follows from a compactness argument and the continuity of $\delta_j$ with respect to the Hausdorff metric, see Theorem \ref{metricthm} ii), that for each $j\in[n]$, there exists a \emph{best-approximating polytope} $Q_{n,j,N}^{\mathrm{best}}$ such that
\begin{equation*}
\delta_j(Q_{n,j,N}^{\mathrm{best}},B_n)= \min\{\delta_j(Q_{n,N},B_n): \, Q_{n,N}\text{ is a polytope in }\R^n\text{ with at most }N\text{ vertices}\}.
\end{equation*}
As a minimizer, a best-approximating polytope also satisfies the bound in Theorem \ref{mainThm}. 
\end{remark}

\begin{remark}\label{rmk-compare-best}
Let $j\in[n]$. If $j\leq \frac{n}{\ln n}$, then Theorem \ref{mainThm} improves the bound in \eqref{BHK-upper-bd} by a factor of $\ln n$, and if $j> \frac{n}{\ln n}$, then Theorem \ref{mainThm} improves the bound in \eqref{BHK-upper-bd} by a factor of $\frac{n}{j}$.
Comparing Theorem \ref{mainThm} and \eqref{best-inscribed}, we see that dropping the restriction that the polytope is contained in the ball (or vice versa) improves the estimate by a factor of dimension. The same phenomenon has already been observed  for the symmetric difference metric \cite{GW2018, Gruber93, Kur2017,Ludwig1999, LSW},  surface area deviation \cite{BHK,GTW2021, HSW, Kur2017} and  mean width metric \cite{glasgrub,Gruber93,HK-DCG, Kur2017,Ludwig1999,LSW}.
\end{remark}

\subsection{Background and notation}

 In what follows, let $\dim(K)$ denote the dimension of the affine hull of a convex compact set $K$ in $\R^n$. For $n\in\mathbb{N}$ and $j\in[n]$, we set $\cK_j^n:=\{K\in\cK^n : \dim(K)\geq j\} = \{K\in\cK^n : V_j(K)>0\}$. In particular, $\cK_n^n$ is the class of convex bodies in $\R^n$ (convex, compact subsets $K$ with nonempty interior). Notice that $\cK_n^n\subset \cK_{n-1}^n\subset \cdots\subset \cK_0^n=\cK^n$. 

The volume $\vol_n(B_n)$ and surface area $\omega_n$ of the $n$-dimensional unit ball $B_n$ are given by the formulas
\begin{equation*}
    \vol_n(B_n) = \frac{\pi^{n/2}}{\Gamma(1+n/2)} \qquad \text{and}\qquad 
    \omega_n =n\vol_n(B_n)=\frac{2\pi^{n/2}}{\Gamma(n/2)},
\end{equation*}
where $\Gamma$ is the gamma function. For $j\in[n]$, the \emph{flag coefficient} $\tbbinom{n}{j}$ is defined by
\begin{equation}\label{ball-binom}
    \bbinom{n}{j} := \binom{n}{j} \frac{\vol_n(B_n)}{\vol_{j}(B_j) \vol_{n-j}(B_{n-j})} = \frac{1}{2} \frac{\omega_{j+1}\omega_{n-j+1}}{\omega_{n+1}},
\end{equation}
where the identity follows by the Legendre duplication formula.
The unit sphere in $\R^n$ is denoted by $\S^{n-1}$, and the uniform probability measure on $\S^{n-1}$ is denoted by $\sigma$. Throughout the paper, $C,c,c_1,C_2$, etc., will denote positive absolute constants whose values may change from line to line. To be clear, the dependence of a positive constant on variable quantities will be denoted explicitly; for example,  $C(n,j)$ denotes a positive constant which depends only on $n$ and $j$.

\subsection{Overview}

In Section \ref{properties-sec} we establish  some of the basic properties of $\delta_j$, and we give estimates for $\delta_j$ in terms of the intrinsic volume ``distances" $\rho_j$ and $\Delta_j$. Next, in Section \ref{sec-background-lemmas}, we provide the relevant background from stochastic and integral geometry needed to prove Theorem \ref{mainThm}. The proof of Theorem \ref{mainThm} is given in Section \ref{sec-proof-mainThm}. Finally, estimates for some of the constants appearing in the proofs are carried out in  Appendix \ref{sec-appendix}, and in Appendix \ref{sec:appendix-B} we briefly verify that a result of Affentranger remains true in the $1$-dimensional case (see Lemma \ref{beta-volume}).

\section{Properties of the intrinsic volume metric}\label{properties-sec}

There are numerous geometrically reasonable and interesting ways to define a metric on the class $\mathcal{K}^n$ of convex bodies. Perhaps the most prominent example is the Hausdorff metric, which plays a central role in convex geometry and functional analysis due to its many useful properties (see, for example, \cite[Ch. 1.8]{SchneiderBook}). 
For $K,L\in\mathcal{K}^n$, the \emph{Hausdorff metric} $d_H:\mathcal{K}^n\times\mathcal{K}^n\to[0,\infty)$ is defined by 
		\begin{equation*}
		    d_H(K,L) = \min \{ \lambda\geq 0 : K\subset L+\lambda B_n,\, L\subset K+\lambda B_n\}.
		\end{equation*}
The metric $\delta_j$ also has several  useful properties, which it inherits from the $j$-dimensional Lebesgue measure, including:

\begin{theorem}[Properties of $\delta_j$ on $\cK_j^n$]\label{metricthm}
    Let $j\in[n]$ and let $\delta_j:\cK^n\times \cK^n \to [0,\infty)$ be the functional defined by \eqref{def:metric}. Then $\delta_j$ is:
    \begin{itemize}
    \item[(i)] a metric on $\cK_k^n$ if $k\geq j$, and a pseudometric if $k<j$;
		\item[(ii)] continuous with respect to the Hausdorff metric;
		
		\item[(iii)] rigid motion invariant, that is, 
			\begin{equation*}
				\delta_j(\vartheta K + x,\vartheta L + x) = \delta_j(K,L) 
			\end{equation*}
			for all orthogonal transformations $\vartheta\in \mathrm{O}(\R^n)$ and all translations $x\in \R^n$;
        \item[(iv)] positively $j$-homogeneous, that is, for all $t>0$ we have that
            \begin{equation*}
                \delta_j(tK,tL) = t^j \delta_j(K,L).
            \end{equation*}
	\end{itemize}
\end{theorem}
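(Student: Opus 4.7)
Parts (iv) and (iii) are direct computations. Since orthogonal projection commutes with scaling, $(tK)|H = t(K|H)$, so $((tK)|H)\triangle((tL)|H) = t\bigl((K|H)\triangle(L|H)\bigr)$, and $\vol_j$ scales as $t^j$ in a $j$-dimensional subspace; integrating against $\nu_j$ yields (iv). Translation invariance is immediate from $(K+x)|H = K|H + x|H$, and for rotations $\vartheta\in\mathrm{O}(\R^n)$ one combines the identity $(\vartheta K)|H = \vartheta(K|\vartheta^{-1}H)$, the fact that $\vartheta$ restricts to an isometry between $\vartheta^{-1}H$ and $H$ (hence preserves $\vol_j$), and the $\mathrm{O}(n)$-invariance of the Haar measure $\nu_j$ under the substitution $H\mapsto\vartheta H$.

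For (i), symmetry is inherited from the symmetric difference and non-negativity is clear. The triangle inequality reduces to the pointwise estimate $\vol_j(A\triangle C)\leq\vol_j(A\triangle B)+\vol_j(B\triangle C)$ (from $A\triangle C\subset (A\triangle B)\cup(B\triangle C)$), applied to $A=K|H$, $B=M|H$, $C=L|H$ and integrated against $\nu_j$. The crux is positive definiteness on $\cK_k^n$ for $k\geq j$. Assume $K,L\in\cK_k^n$ with $\delta_j(K,L)=0$. Since $H\mapsto K|H$ is continuous from $\Gr(n,j)$ into $(\cK^n,d_H)$ and the symmetric difference volume is Hausdorff-continuous on convex bodies in a fixed subspace, the integrand $f(H):=\vol_j((K|H)\triangle(L|H))$ is continuous and non-negative with $\int f\,\dint\nu_j=0$; hence $f\equiv 0$ on $\Gr(n,j)$. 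For $\nu_j$-a.e.\ $H$, both $K|H$ and $L|H$ are $j$-dimensional (since $V_j(K),V_j(L)>0$), and vanishing of the symmetric-difference volume together with convexity forces $K|H=L|H$; continuity of projection propagates the equality to every $H$. Finally, for each line $\ell\subset\R^n$ choose $H\in\Gr(n,j)$ with $\ell\subset H$; then $K|\ell=(K|H)|\ell=(L|H)|\ell=L|\ell$, which shows the support functions of $K$ and $L$ agree on $\S^{n-1}$ and thus $K=L$. When $k<j$, both projections have dimension at most $k<j$, so $\vol_j$ trivially annihilates their symmetric difference and $\delta_j(K,L)=0$ even when $K\neq L$, giving only a pseudometric.

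For (ii), let $K_m\to K$ and $L_m\to L$ in $d_H$. Orthogonal projection is $1$-Lipschitz with respect to the Hausdorff metric, so $K_m|H\to K|H$ and $L_m|H\to L|H$ in Hausdorff for every $H$, and Hausdorff-continuity of the symmetric difference volume on convex bodies yields pointwise convergence of $f_m(H):=\vol_j((K_m|H)\triangle(L_m|H))$ to $f(H)$. Since all $K_m,L_m$ eventually lie in a common ball, $f_m$ is uniformly bounded, and dominated convergence gives $\delta_j(K_m,L_m)\to\delta_j(K,L)$. The main obstacle in the whole theorem is the positive-definiteness step: one must promote $\nu_j$-almost-everywhere vanishing of the integrand to pointwise vanishing via continuity in $H$, and then invoke a uniqueness result recovering a convex body from its $j$-dimensional projections to conclude $K=L$; the remaining items are either direct computations or standard measure-theoretic limit arguments.
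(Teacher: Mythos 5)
Your proof is correct and follows the same overall strategy as the paper; the one place where you genuinely diverge is the positive-definiteness step in (i). From $\delta_j(K,L)=0$ the paper extracts $K|H=L|H$ for $\nu_j$-almost all $H$ (using that the projections are $j$-dimensional almost everywhere when $K,L\in\cK_j^n$) and then invokes a modification of Gardner's projection-uniqueness theorem \cite[Thm.\ 3.1.1]{GardnerBook} to conclude $K=L$, leaving that modification to the reader. You instead give a self-contained argument: the almost-everywhere equality upgrades to $K|H=L|H$ for \emph{every} $H$ by continuity of $H\mapsto K|H$, $H\mapsto L|H$ and the full support of the Haar measure on $\Gr(n,j)$, and then, for every line $\ell$, choosing $H\in\Gr(n,j)$ with $\ell\subset H$ yields $K|\ell=(K|H)|\ell=(L|H)|\ell=L|\ell$, so the support functions of $K$ and $L$ agree and $K=L$. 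This is a clean, elementary replacement for the citation and in effect re-proves the special case of Gardner's result that is actually needed. (The intermediate claim that $f(H)=\vol_j((K|H)\triangle (L|H))$ is continuous in $H$ is true but a bit delicate, since the ambient $j$-plane moves with $H$; it is also superfluous here, because passing from almost-everywhere to everywhere equality already follows from continuity of the projections alone.)

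One small slip in the pseudometric case $k<j$: you write that ``both projections have dimension at most $k<j$,'' but $\cK_k^n=\{K:\dim K\geq k\}$ is defined with a \emph{lower} bound, so it contains bodies of every dimension from $k$ to $n$, and for those of dimension at least $j$ the projections are $j$-dimensional and positive definiteness does hold. The correct point, as in the paper, is that for $k<j$ the class $\cK_k^n$ contains some pairs $K\neq L$ with $\max\{\dim K,\dim L\}<j$, for which $\delta_j(K,L)=0$; this is what shows $\delta_j$ is only a pseudometric there. The remaining items --- the triangle inequality via $A\triangle C\subset (A\triangle B)\cup(B\triangle C)$, dominated convergence in (ii), and the direct computations for (iii) and (iv) --- match the paper's proof.
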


\begin{proof}
    Note that for $K,L\in\cK^n$, the function $H \mapsto \vol_j((K|H)\cap (L|H))$ is a Borel function on the homogeneous space $\Gr(n,j)$. Thus $\delta_j$ is well-defined on $\cK_j^n$  since $\nu_j$ is a Radon measure (see, for example, \cite[Ch.\ 12 \& Ch.\ 13]{SchneiderWeilBook}).
    The integrand is nonnegative and symmetric. Thus, so too is $\delta_j$. If $K=L$, then the set $(K|H)\triangle(L|H)$ is empty, so $\delta_j(K,L)=0$. If $\delta_j(K,L)=0$, then $\vol_j((K|H)\triangle(L|H))=0$ for almost all $H\in\Gr(n,j)$.
    For $K,L\in \cK_j^n$ we have that $\min\{\dim(K|H),\dim(L|H)\}=j$ for almost all $H\in\Gr(n,j)$.
    Thus, since $\Delta_j$ is a metric on the class of $j$-dimensional convex bodies in $H$, $\delta_j(K,L)=0$ implies that $K|H = L|H$ for almost all $H\in\Gr(n,j)$. Now the proof of \cite[Thm. 3.1.1]{GardnerBook} can be modified to show that if $K|H=L|H$ for almost all $H$, then $K=L$. 
    Furthermore, since the integrand satisfies the triangle inequality, so too does $\delta_j$. 
    Finally, if $K,L\in\cK^n$, $K\neq L$ and $\max\{\dim(K),\dim(L)\} < j$, then $\Delta_j((K|H),(L|H)) = \vol_j((K|H)\triangle (L|H)) = 0$ for all $H\in\Gr(n,j)$ since $\dim((K|H)\triangle (L|H))\leq \max\{\dim(K),\dim(L)\}<j$. This proves (i).

    For part (ii),  consider a sequence $\{K_i\}$ in $\cK^n$ that converges to $K\in\cK^n$ with respect to the Hausdorff metric $d_H$. Then for $H\in\Gr(n,j)$, by the continuity of $L\mapsto L|H$ we find that $K_i|H\to K|H$ with respect to $d_H$.  Therefore, by the dominated convergence theorem, 
    \begin{equation*}
        \lim_{i\to\infty} \delta_j(K_i,K) = \bbinom{n}{j} \int_{\Gr(n,j)} \lim_{i\to\infty} \Delta_j(K_i|H,K|H) \, \dint\nu_j(H) = 0
    \end{equation*}
    since $\Delta_j$ is continuous with respect to $d_H$ (see, for example,  \cite[Thm.\ 7]{ShephardWebster-1965}).
    For sequences $\{K_i\}$ and $\{L_i\}$ that converge to $K$ and $L$, respectively,  we find that by the triangle inequality,
    \begin{equation*}
        \lim_{i\to \infty} |\delta_j(K_i,L_i)-\delta_j(K,L)| \leq \lim_{i\to\infty} \big(\delta_j(K_i,K)+\delta_j(L_i,L)\big) = 0
    \end{equation*}
    with respect to the Hausdorff metric. 
    Thus $\delta_j$ is continuous with respect to the Hausdorff metric.

    Parts (iii) and (iv) follow since the functional $\vol_j$ in the integrand is rigid motion invariant and positively $j$-homogeneous, respectively, and by the invariance of $\nu_j$ with respect to $\mathrm{O}(\R^n)$.
\end{proof}

The Hausdorff metric space $(\cK^n,d_H)$ is complete, and so is the symmetric difference metric space $(\cK_n^n\cup\{\varnothing\},\Delta_n)$. In the latter case, we need to append the empty set to account for sequences of convex bodies $\{K_i\}\subset\cK^n_n$ that converge to a lower-dimensional convex set with respect to the Hausdorff metric, that is, if $K_i\to K_0$ and $\dim(K_0) < n$, then $\Delta_n(K_i,\varnothing) = \vol_n(K_i) \to 0$ and therefore $K_i\to \varnothing$ with respect to $\Delta_n$.
Furthermore, $d_H$ and $\Delta_n=\delta_n$ are equivalent on $\cK_n^n$. For more background, see the article \cite{ShephardWebster-1965} by Shephard and Webster.

We also conjecture that  $d_H$ and $\delta_j$ induce the same topology on $\cK_j^n$. We therefore pose the following 
\begin{question}[uniform bound]
    Let $j\in\{2,\dotsc,n-1\}$. If $\{K_i\}$ is a sequence in $\cK_j^n$ that converges to $K\in\cK_j^n$ with respect to $\delta_j$, does it follow that $\{K_i\}$ is uniformly bounded, that is, does there exist $R>0$ such that $\bigcup_{i\in\mathbb{N}} K_i \subset RB_n$?
\end{question}
A positive answer to this question would immediately imply (by the Blaschke Selection Theorem) that a sequence $\{K_i\}$ which converges with respect to $\delta_j$ also converges with respect to $d_H$.
For $j=n$, this question was answered in the positive in \cite[Lem.\ 11]{ShephardWebster-1965}, and for $j=1$ see \cite[Thm.\ 3]{Vitale-1985} and \cite[Thm.\ 2]{Florian-1989}.

\begin{remark}
Similiar to \cite[Thm.\ 13]{ShephardWebster-1965}, we find that the identity mapping $\mathrm{id}\colon (\cK^n,d_H) \to (\cK^n,\delta_j)$ is not uniformly continuous for $j\geq 2$.  For example, if $K_\ell = \ell B_n$ and $L_\ell = (\ell+\frac{1}{\ell})B_n$, then $d_H(K_\ell,L_\ell) = \frac{1}{\ell} \to 0$ as $\ell\to \infty$, but
\begin{equation*}
    \delta_j(K_\ell,L_\ell) = V_j(L_\ell)-V_j(K_\ell) = \left[\left(\ell+\frac{1}{\ell}\right)^j-\ell^j\right] V_j(B_n) \not\to 0 \quad \text{as $\ell\to \infty$.}
\end{equation*}
For $j=1$, the identity mapping is uniformly continuous since
\begin{equation*}
    \delta_1(K,L) \leq \bbinom{n}{1} d_H(K,L).
\end{equation*}
\end{remark}

\subsection{Independence of \texorpdfstring{$\delta_j$}{delta j} with respect to the ambient space}

The Hausdorff metric is intrinsic in the following sense: If $K,L\subset \R^{n+\ell}$, $\ell>0$, are $n$-dimensional convex bodies contained in $\R^n\subset \R^{n+\ell}$, then $d_H^{n+\ell}(K,L) = d_H^n(K,L)$, where $d_H^m$ denotes the Hausdorff metric defined in $\R^m$. Thus the Hausdorff metric does not depend on the dimension of the ambient space and can be evaluated in the affine hull of $K\cup L$.
Another point of view is that $(\cK^n,d_H^n)$ can be isometrically embedded in $(\cK^{n+\ell},d_H^{n+\ell})$. Next, we show that the same independence of the dimension also holds for the intrinsic volume metric. In what follows, let $\delta_j^m$ denote the $j$th intrinsic volume metric in $\R^m$.

\begin{theorem}[$\delta_j$ is independent of the ambient space dimension]\label{thm:intrinsic}
    Let $n,\ell\in\N$ and $j\in[n]$. 
	Then the restriction of $(\cK_j(\R^{n+\ell}),\delta_j^{n+\ell})$ to $\cK_j(\R^n)$ is isometric to $(\cK_j(\R^n), \delta_j^n)$.
\end{theorem}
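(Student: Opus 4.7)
The plan is to show $\delta_j^{n+\ell}(K,L) = \delta_j^n(K,L)$ for $K,L \in \cK_j(\R^n)$, where these are viewed in $\R^{n+\ell}$ via the standard inclusion $\R^n \hookrightarrow \R^{n+\ell}$. First I would apply the identity $\vol_j(A\triangle B) = \vol_j(A) + \vol_j(B) - 2\vol_j(A \cap B)$ fibrewise together with Kubota's formula \eqref{kubota} to write
\begin{equation*}
    \delta_j^m(K,L) = V_j(K) + V_j(L) - 2\Psi_m(K,L), \qquad \Psi_m(K,L) := \bbinom{m}{j}\int_{\Gr(m,j)} \vol_j((K|H) \cap (L|H))\, \dint\nu_j(H)
\end{equation*}
in any ambient dimension $m \geq j$. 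Since $V_j$ is intrinsic (a direct consequence of Kubota's formula and the normalization $V_j(L) = \vol_j(L)$ for $j$-dimensional bodies), it suffices to establish $\Psi_{n+\ell}(K,L) = \Psi_n(K,L)$.

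Next I would record the key geometric observation: for $\nu_j$-a.e.\ $H \in \Gr(n+\ell,j)$, the subspace $H' := \pi_{\R^n}(H)$ lies in $\Gr(n,j)$ and the restriction $\pi_H|_{H'}\colon H' \to H$ is a linear bijection. This follows from the identity $\pi_H(x) = \pi_H(\pi_{H'}(x))$ for $x \in \R^n$, which in turn holds because any $v \in H'^\perp \cap \R^n$ is orthogonal to $H$ (for $h \in H$, $\langle v,h\rangle = \langle v,\pi_{\R^n}(h)\rangle = 0$ since $\pi_{\R^n}(h) \in H'$). Bijectivity lets intersection commute with the map, giving
\begin{equation*}
    \vol_j((K|H) \cap (L|H)) = J(H,H')\, \vol_j((K|H') \cap (L|H')),
\end{equation*}
where $J(H,H') := |\det(\pi_H|_{H'})|$ is the product of cosines of the principal angles between $H$ and $H'$.

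The main technical step will be the disintegration identity
\begin{equation*}
    \bbinom{n+\ell}{j}\int_{\Gr(n+\ell,j)} g(\pi_{\R^n}(H))\, J(H,\pi_{\R^n}(H))\, \dint\nu_j(H) = \bbinom{n}{j}\int_{\Gr(n,j)} g(H')\, \dint\nu_j(H')
\end{equation*}
for every bounded Borel $g\colon \Gr(n,j)\to\R$. The left-hand side defines an $\mathrm{O}(\R^n)$-invariant Borel measure on $\Gr(n,j)$, since $\nu_j$ on $\Gr(n+\ell,j)$ is $\mathrm{O}(\R^{n+\ell})$-invariant and $J$ is invariant under the standard embedding $\mathrm{O}(\R^n) \hookrightarrow \mathrm{O}(\R^{n+\ell})$ (principal angles are isometry invariants). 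By uniqueness of the Haar probability measure on $\Gr(n,j)$, this measure equals $c\,\nu_j$ for some $c > 0$; testing against $g(H') = \vol_j(K|H')$ for any $j$-dimensional $K \subset \R^n$ — using Kubota's formula in both $\R^n$ and $\R^{n+\ell}$ together with the Jacobian formula above — identifies $c$ and yields the displayed identity. Applying it with $g(H') = \vol_j((K|H') \cap (L|H'))$ produces $\Psi_{n+\ell}(K,L) = \Psi_n(K,L)$, finishing the proof.

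The hard part will be the disintegration identity: while morally it reduces to Haar uniqueness plus a normalization computation, one must verify the $\mathrm{O}(\R^n)$-equivariance of the Jacobian weight and carefully exclude the $\nu_j$-null set of subspaces $H$ for which $\pi_{\R^n}|_H$ fails to be injective. The reduction to the intersection integral via $V_j$ and the geometric projection identity are routine once set up.
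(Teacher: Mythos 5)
Your proposal is correct, and it reaches the conclusion by a genuinely different route than the paper. The paper invokes and carefully re-derives Chern's integral-geometric formula (Theorem~\ref{thm:chern}) to disintegrate $\int_{\Gr(n+\ell,j)}$ as an iterated integral over $H'\in\Gr(\R^n,j)$ and $H\in\Gr(H'+(\R^n)^\perp,j)$ weighted by a power of $\cos\Theta_{H,H'}$, and then identifies the resulting constants $c_{j+\ell,\ell,n-j+1}/c_{j+\ell,\ell,n-j}$ with the ratio of flag coefficients. You instead observe that the weighted pushforward $g\mapsto \int_{\Gr(n+\ell,j)} g(\pi_{\R^n}H)\,J(H,\pi_{\R^n}H)\,\dint\nu_j(H)$ is an $\mathrm{O}(\R^n)$-invariant finite Borel measure on $\Gr(n,j)$ (by $\mathrm{O}(\R^{n+\ell})$-invariance of $\nu_j$, isometry-invariance of principal angles, and the fact that $\pi_{\R^n}$ intertwines the block embedding $\mathrm{O}(\R^n)\hookrightarrow\mathrm{O}(\R^{n+\ell})$), hence proportional to $\nu_j$ by Haar uniqueness, and you fix the constant by testing against $\vol_j(K|\cdot)$ for a $j$-dimensional body and using Kubota's formula on both sides. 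This gives the needed disintegration without quoting Chern, which makes the argument more self-contained; what is lost is the explicit closed form for general cosine exponents, which you do not need here. The geometric core of both proofs is the same: your statement that for $\nu_j$-a.e.\ $H$ the map $\pi_H|_{H'}\colon H'\to H$ is a linear bijection and therefore $(K|H)\cap(L|H)=\pi_H\bigl((K|H')\cap(L|H')\bigr)$ is exactly the paper's claim~\eqref{eqn:claim}, with the Jacobian $J(H,H')=\cos\Theta_{H,H'}$ pulled out as a factor rather than absorbed into a projected volume. One small refinement you could note: the bijectivity of $\pi_H|_{H'}$ is automatic once $\dim H'=j$ (if $w\in H'\cap H^\perp$ with $w=\pi_{\R^n}h$, $h\in H$, then $0=\langle w,h\rangle=\|w\|^2$), so the only genuine null set to exclude is $\{H: H\cap(\R^n)^\perp\neq\{0\}\}$.
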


To prove this theorem, we first recall some facts about projecting between linear subspaces. 
Given two linear $j$-dimensional subspaces $U$ and $V$ in $\R^n$, we define the (asymmetric) angle $\Theta_{U,V}\in[0,\frac{\pi}{2}]$ by
\begin{equation*}
    \cos\Theta_{U,V} = \frac{\vol_j(B_j^U|V)}{\vol_j(B_j)},
\end{equation*}
where $B_j^U$ denotes the $j$-dimensional Euclidean unit ball in $U$.
We call $\cos\Theta_{U,V}$ the \emph{projection factor} of $U$ on $V$ and in the literature one also often finds the notation $|\langle U, V\rangle| = \cos\Theta_{U,V}$.
Note that $\Theta_{U,V} = \Theta_{V^\bot,U^\bot} = \Theta_{U,U|V}$, and if $\dim U = \dim V$ then $\Theta_{U,V}=\Theta_{V,U}$. Furthermore, we have that
\begin{equation*}
    \cos\Theta_{U,V} = \prod_{i=1}^j \cos \vartheta_i,
\end{equation*}
where $0\leq \vartheta_1 \leq \cdots \leq \vartheta_j \leq \frac{\pi}{2}$ are the principal angles between $U$ and $V$ (also called canonical or Jordan angles). We refer to \cite{Mandolesi-2021} for further references. Chern \cite[eq.\ (24)]{Chern-1966} showed that
\begin{equation}
    c_{n,j,\ell} = \int_{\Gr(n,j)} \left(\cos\Theta_{H,\R^j}\right)^\ell \, \dint \nu_j(H) 
    = \frac{\overline{\omega}_{n+\ell}\, \overline{\omega}_{\ell}\, \overline{\omega}_{j}\, \overline{\omega}_{n-j}}{\overline{\omega}_n\,\overline{\omega}_{l+j}\,\overline{\omega}_{n-j+\ell}}
\end{equation}
for $\ell\in\N$, $j\in[n]$ and $\overline{\omega}_p = \prod_{k=1}^p \omega_k$. The following integral-geometric formula follows from a related formula established by Chern \cite[eq.\ (28)]{Chern-1966} (see also Santal\'o \cite[eq.\ (14.40)]{Santalo-2004}). We write $\Gr(V,p)$ for the Grassmannian of all $p$-dimensional linear subspaces in a linear space $V$ and $\nu_p^V$ for the corresponding invariant Haar probability measure on $\Gr(V,p)$.
\begin{theorem}[\cite{Chern-1966}]\label{thm:chern}
Let $n\in\N$, $p,q\in[n]$ and $q\leq p$. Fix $V\in\Gr(n,n-q)$. If $f:\Gr(n,n-p)\to [0,\infty)$ is integrable, then
\begin{align*}
    &c_{n-p+q,q,p-q} \int_{\Gr(n,n-p)} f(H)\, \dint \nu_{n-p}(H) \\
    &\qquad =  \int_{\Gr(V,n-p)} \int_{\Gr(H'+V^\bot,n-p)} \left(\cos\Theta_{H,H'}\right)^{p-q} f(H) \, \dint \nu_{n-p}^{H'+V^\bot}(H) \,\dint \nu_{n-p}^V(H').
\end{align*}
\end{theorem}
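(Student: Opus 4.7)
The plan is to interpret the identity as a disintegration of the Haar measure $\nu_{n-p}$ on $\Gr(n,n-p)$ along the projection map $\pi_V\colon H\mapsto H|V$ into $\Gr(V,n-p)$. The hypothesis $q\leq p$ guarantees $\dim H + \dim V = (n-p)+(n-q) \geq n$, so for $\nu_{n-p}$-a.e.\ $H$ the image $H|V$ is a full $(n-p)$-dimensional subspace of $V$, and $\pi_V$ is well-defined on a set of full measure. The fiber $\pi_V^{-1}(H')$ consists of those $H$ lying inside $H'+V^\bot$ and transverse to $V^\bot$, which is an open full-measure subset of $\Gr(H'+V^\bot,n-p)$. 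Hence the map $(H',H)\mapsto H$ parametrizes $\Gr(n,n-p)$ up to a null set, and the claimed identity reduces to matching two measures on $\Gr(n,n-p)$.

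The main technical step is the Jacobian of this disintegration. I would fix coordinates so that $V=\R^{n-q}\times\{0\}$, parametrize $H$ by an orthonormal $n\times(n-p)$ Stiefel frame $U$ split into a $V$-block $C\in\R^{(n-q)\times(n-p)}$ on top and a $V^\bot$-block $D\in\R^{q\times(n-p)}$ on the bottom, and then change variables to $(H',\phi)$, where $H'=\mathrm{col}(C)\in\Gr(V,n-p)$ and $\phi\in\mathrm{Hom}(H',V^\bot)$ is the graph representation of $H$ over $H'$ inside $H'+V^\bot$. A direct block QR calculation gives $\cos\Theta_{H,H'}=\det(I+\phi^{\top}\phi)^{-1/2}$ together with the factorization
\begin{equation*}
    \dint\nu_{n-p}(H) \;=\; \kappa\,(\cos\Theta_{H,H'})^{p-q}\, \dint\nu^{H'+V^\bot}_{n-p}(H)\, \dint\nu^V_{n-p}(H')
\end{equation*}
for a universal constant $\kappa$. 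The exponent $p-q$ records the codimension shift when the $p$-codimensional complement of $H$ in $\R^n$ is replaced, inside the $(n-p+q)$-dimensional ambient space $H'+V^\bot$, by a $q$-codimensional complement.

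To pin down $\kappa=c_{n-p+q,q,p-q}$, I would test both sides with $f\equiv 1$. By $O(H'+V^\bot)$-invariance the inner integral is independent of $H'$, and since inside $H'+V^\bot$ the orthogonal complement of $H'$ is exactly $V^\bot$, the duality $\cos\Theta_{U,W}=\cos\Theta_{U^\bot,W^\bot}$ turns it into the defining integral
\begin{equation*}
    \int_{\Gr(H'+V^\bot,n-p)}(\cos\Theta_{H,H'})^{p-q}\,\dint\nu^{H'+V^\bot}_{n-p}(H)=c_{n-p+q,q,p-q}.
\end{equation*}
The outer integration over $\Gr(V,n-p)$ preserves this constant, matching the LHS.

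The main obstacle is the Jacobian computation, since it is the step in which the specific density $(\cos\Theta_{H,H'})^{p-q}$ must be extracted from the $O(n)$-invariant measure $\nu_{n-p}$ once one re-expresses it in coordinates adapted to the fixed subspace $V$. An alternative route, consistent with the attribution, is to rewrite the right-hand side in the notation of Chern's related formula \cite[eq.~(28)]{Chern-1966}, using Santal\'o's exposition \cite[eq.~(14.40)]{Santalo-2004} to bridge the change of variables, and then verify that the two formulations coincide after a standard reparametrization of the sub-Grassmannians.
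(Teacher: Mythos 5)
Your route is genuinely different from the one in the paper: there the theorem is obtained by translating Chern's integral formula \cite[eq.\ (28)]{Chern-1966} into the present notation --- the work consists in identifying Chern's determinant $D$ with the projection factor $\cos\Theta_{L^\bot,V}$ via an explicit pair of adapted orthonormal frames, and then pushing the measures forward under $E'\mapsto (E')^{\bot_V}$ and $L\mapsto L^\bot$. You instead propose to prove the disintegration of $\nu_{n-p}$ along $H\mapsto H|V$ from scratch. Your framework is sound: the hypothesis $q\leq p$ does make this map defined almost everywhere, the fiber over $H'$ is indeed the full-measure subset of $\Gr(H'+V^\bot,n-p)$ of subspaces transverse to $V^\bot$, and your normalization step is correct --- testing with $f\equiv 1$ and using $\Theta_{H,H'}=\Theta_{(H')^{\bot},H^\bot}$ (complements taken inside $H'+V^\bot$, where $(H')^\bot=V^\bot$ has dimension $q$) reduces the inner integral to the defining integral for $c_{n-p+q,q,p-q}$.

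The one genuine gap is that the Jacobian factorization, which carries all of the mathematical content (in particular the exponent $p-q$), is asserted rather than derived: ``a direct block QR calculation gives the factorization'' is essentially the statement of the theorem in coordinates, so it cannot be left at that. What has to be checked is that, writing $H$ as the graph of $\phi\in\mathrm{Hom}(H',V^\bot)$ over $H'=H|V$, the $\mathrm{O}(n)$-invariant measure takes the form
\[
\dint\nu_{n-p}(H)=\kappa\,\det(I+\phi^{\top}\phi)^{-\frac{n-p+q}{2}}\,\det(I+\phi^{\top}\phi)^{-\frac{p-q}{2}}\,\dint\phi\,\dint\nu_{n-p}^V(H'),
\]
where the first determinant is the density of the invariant measure of the fiber $\Gr(H'+V^\bot,n-p)$ in graph coordinates and the second is the claimed weight $(\cos\Theta_{H,H'})^{p-q}$; equivalently, that the total fiber density is $\det(I+\phi^{\top}\phi)^{-n/2}$. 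This is true --- the exponent can be confirmed in low-dimensional cases, e.g.\ $p=n-1$, where $H$ is a line and the computation reduces to the polar decomposition of the uniform measure on $\S^{n-1}$ --- and it can be established by the Stiefel-frame computation you outline or, as you yourself note, by falling back on Chern and Santal\'o, which is what the paper does. But as written this step is a placeholder, not a proof, and it is precisely the step the paper's argument is designed to avoid recomputing.
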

\begin{proof}
    The integral-geometric formula \cite[eq.\ (28)]{Chern-1966} of Chern  shows that
    \begin{equation*}
        c_{n-p+q,q,p-q} \int_{\Gr(n,p)} f(L^\bot) \, \dint\nu_p(L) = \int_{\Gr(V,p-q)} \int_{\Gr^{[E']}(n,p)} \!\!\!\!\!\left(\cos \Theta_{L^\bot, V}\right)^{p-q} f(L^\bot) \, \dint \nu_p^{[E']}(L) \, \dint \nu_{p-q}^V(E'),
    \end{equation*}
    where $\Gr^{[E']}(n,p):=\{ L \in \Gr(n,p) : L\supset E'\}$ and $\nu_p^{[E']}$ is the invariant Haar probability measure on $\Gr^{[E']}(n,p)$.
    Note that the constant $c_{n-p+q,q,p-q}$ is related to the constant that appears in \cite[p.\ 106]{Chern-1966}, where the difference is due to the fact that we are considering the probability measure $\nu_p$ on $\Gr(n,p)$ instead of the Hausdorff measure. Furthermore, in \cite[eq.\ (28)]{Chern-1966} the function $D=D(L,V)>0$ is used instead of $\cos\Theta_{L^\bot,V}$. To see that this change is allowed, we note that Chern assumed that $\dim(L\cap V) = \dim(E^p\cap (E_0^q)^\bot)=p-q$, which is true for almost all $L=E^p\in\Gr(n,p)$ for a fixed $V=(E_0^q)^\bot\in \Gr(n,n-q)$, and considered two orthogonal bases $e_1,\dotsc,e_n$ and $f_1,\dotsc,f_n$ of $\R^n$ such that:
    \begin{itemize}
        \item $e_i=f_i$ for $1\leq i \leq p-q$ and $e_1,\dotsc,e_{p-q}$ spans $L\cap V = E^p\cap(E_0^q)^\bot$,
        \item $e_1,\dotsc,e_p$ spans the $p$-dimensional linear subspace $L=E^p$,
        \item $e_{p+1},\dotsc,e_n$ spans the $(n-p)$-dimensional linear subspace $L^\bot= (E^p)^\bot$, 
        \item $f_1,\dotsc,f_{n-q}$ spans the $(n-q)$-dimensional linear subspace $V = (E_0^q)^\bot$, and
        \item $f_{n-q+1},\dotsc,f_n$ spans the $q$-dimensional linear subspace $V^\bot = E_0^q$.
    \end{itemize}
    Under these conditions, there is a proper orthonormal transformation $U\in \mathrm{SO}(n)$ such that $e_i = \sum_{j=1}^n u_{ij} f_j$ for all $1\leq i \leq n$.
    Now the orthogonal projection $\cdot|V$ from $\R^n$ to $V$ acts on $e_\lambda$ by
    \begin{equation*}
        e_\lambda|V = \sum_{s=n-p+1}^n u_{\lambda s}\, f_s|V = \sum_{s=p-q+1}^{n-q} u_{\lambda s}\, f_s \quad \text{for all $p+1\leq \lambda \leq n$},
    \end{equation*}
    where we used the fact that $e_1,\dotsc,e_{p-q},f_{p-q+1},\dotsc,f_{n-q}$ spans $V=(E_0^q)^\bot$, and therefore $f_s|V=f_s$ for $p-q+1\leq s \leq n-q$ and $f_s|V=0$ for $s\geq n-q+1$.
    This implies that:
    \begin{itemize}
        \item $f_{p-q+1},\dotsc,f_{n-q}$ spans the $(n-p)$-dimensional linear subspace $L^\bot|V$, and 
        \item $e_{p-q+1},\dotsc,e_p$ spans the $q$-dimensional linear subspace $V^\bot|L$.
    \end{itemize}
    Chern considered the submatrix\footnote{There is apparently a misprint in \cite[eq.\ (27)]{Chern-1966}, where it is stated that $D=\det(u_{\lambda \mu})$ instead of $D=\det(u_{\lambda s})$. 
    Indeed, using the notation of the paper, we have that $(de_\alpha, e_\lambda) = \sum_{s=p-q+1}^{n-q} u_{\lambda s} (de_{\alpha}, f_s)$.}
    $U'=\left(u_{\lambda s}\right)_{\substack{p+1\leq \lambda\leq n\\ p-q+1\leq s\leq n-q}}$ which determines the relative position between the $(n-p)$-dimensional spaces $L^\bot$ and $L^\bot|V$.
    Thus
    $D=\det(U')=\det(u_{\lambda s})$ is equal to the projection factor $\cos\Theta_{L^\bot,V}=\cos\Theta_{L^\bot, L^\bot|V}$ of $L^\bot=(E^p)^\bot$ to $V=(E_0^q)^\bot$.
    
    Finally, note that for $E'\in\Gr(V,p-q)$,
    \begin{align*}
        \Gr^{[E']}(n,p) 
            &= \{ L \in \Gr(n,p) : L\supset E'\} \\
            &= \{ H^\bot :  H\in \Gr(n,n-p), H\subset (E')^{\bot_V} + V^\bot\} \\
            &= \{H^\bot : H\in \Gr((E')^{\bot_V}+V^\bot, n-p)\},
    \end{align*}
    where we write $\bot_V$ for the orthogonal complement of a subspace in $V$.
    The statement of the theorem then follows by the change of variables $H' = (E')^{\bot_V}$ 
 (respectively, $H = L^\bot$), where we note that the push-forward of $\nu_{p-q}^V$ under the map $E'\mapsto (E')^{\bot_V}$ from $\Gr(V,p-q)$ to $\Gr(V,n-p)$ is $\nu_{n-p}^V$ (respectively, the push-forward of $\nu_p$ and under the map $L\mapsto L^\bot$ from $\Gr(n,p)$ to $\Gr(n,n-p)$ is the measure $\nu_{n-p}$), and the fact that $\cos\Theta_{H,V} = \cos\Theta_{H,H|V} = \cos\Theta_{H,H'}$. Also note that the push-forward of $\nu_p^{[(H')^{\bot_V}]}$ under the map $H\mapsto H^\bot$ from $\Gr^{[(H')^{\bot_V}]}(n,p)$ to $\Gr(H'+V^\bot,n-p)$ is the measure $\nu_{n-p}^{H'+V^\bot}$.
\end{proof}

If we apply Theorem \ref{thm:chern} to the function $H\mapsto \vol_j(K|H)$ for $H\in\Gr(n+\ell,j)$ and a fixed convex body $K\subset V=\R^n$, where we identify $(\R^n)^\bot\subset \R^{n+\ell}$ with $\R^\ell$, then
\begin{align*}
    &c_{j+\ell,\ell,n-j} \int_{\Gr(n+\ell,j)} \vol_j(K|H) \, \dint \nu_j(H) \\
        &\qquad = \int_{\Gr(n,j)} \int_{\Gr(H'+\R^\ell, j)} \left(\cos \Theta_{H,H'}\right)^{n-j} \vol_j(K|H) 
            \, \dint \nu_j^{H'+\R^{\ell}}(H) \, \dint\nu_{j}^{\R^n}(H')\\
        &\qquad = \int_{\Gr(n,j)} \vol_j(K|H') \int_{\Gr(H'+\R^\ell, j)} \left(\cos \Theta_{H,H'}\right)^{n-j+1} 
            \, \dint \nu_j^{H'+\R^{\ell}}(H) \, \dint\nu_{j}^{\R^n}(H')\\
        &\qquad = c_{j+\ell,\ell,n-j+1} \int_{\Gr(n,j)} \vol_j(K|H') \,\dint\nu_{j}(H'),
\end{align*}
where we used the fact that $\vol_j(K|H) = (\cos\Theta_{H,H'}) \vol_j(K|H')$, which holds due to the special positions of $K,H$ and $H'$. We have that
\begin{equation*}
    \frac{c_{j+\ell,\ell,n-j+1}}{c_{j+\ell,\ell,n-j}} 
    = \frac{\omega_{n+\ell+1}}{\omega_{n+1}} \frac{\omega_{n-j+1}}{\omega_{n-j+\ell+1}} 
    = \bbinom{n}{j} \bbinom{n+\ell}{j}^{-1},
\end{equation*}
which yields
\begin{equation*}
    V_j(K) = \bbinom{n+\ell}{j} \int_{\Gr(n+\ell,j)} \vol_j(K|H) \, \dint \nu_j(H) = \bbinom{n}{j} \int_{\Gr(n,j)} \vol_j(K|H') \, \dint \nu_j(H').
\end{equation*}
This again verifies that $V_j(K)$ is  independent of the dimension of the ambient space; see also \cite[Thm.\ 6.2.2]{SchneiderWeilBook}.

We are now ready to prove that $\delta_j(K,L)$ is also independent of the dimension of the ambient space.
\begin{proof}[Proof of Theorem \ref{thm:intrinsic}]
    Let $K,L\in \cK_j(\R^n) \subset \cK_j(\R^{n+\ell})$. We set $V=\R^n$ and identify $V^\bot=\R^\ell\subset \R^{n+\ell}$.
    We apply Theorem \ref{thm:chern} to the measurable functions $H\mapsto \vol_j(K|H)$, $H\mapsto \vol_j(L|H)$ and $H\mapsto \vol_j((K|H)\cap (L|H))$,
    and we claim that  for $H'\in \Gr(V,j)$,
    \begin{equation}\label{eqn:claim}
        \vol_j((K|H)\cap (L|H)) = \vol_j([(K|H')\cap (L|H')]|H) \quad \text{for almost all $H\in\Gr(H'+V^\bot,j)$}.
    \end{equation}
    Since $\vol_j(\cdot|H) = (\cos\Theta_{H,H'}) \vol_j(\cdot|H')$, Theorem \ref{thm:intrinsic} will follow since by \eqref{eqn:claim}, 
    \begin{align*}
        \delta_j^{n+\ell}(K, L) 
        &= \bbinom{n+\ell}{j} \int_{\Gr(n+\ell,j)} \left[\vol_j(K|H) + \vol_j(L|H) - 2 \vol_j((K|H)\cap (L|H)) \right]\, \dint \nu_j(H)\\
        &= c_{j+\ell,\ell,n-j}^{-1} \bbinom{n+\ell}{j} \int_{\Gr(V,j)} \bigg[\vol_j(K|H')+\vol_j(L|H')-2\vol_j((K|H')\cap (L|H'))\\
        &\qquad \qquad \qquad \quad    \times \int_{\Gr(H'+V^\bot,j)} \left(\cos\Theta_{H,H'}\right)^{n-j+1}\, \dint \nu_j^{H'+V^\bot}(H) \bigg] \dint \nu_j^V(H')\\
        &= \frac{c_{j+\ell,\ell,n-j+1}}{c_{j+\ell,\ell,n-j}} \bbinom{n+\ell}{j} \bbinom{n}{j}^{-1} \delta_j^n(K, L) = \delta_j^n(K,L).
    \end{align*}
    To prove claim \eqref{eqn:claim}, we note that $K|H = (K|H')|H$ and $L|H= (L|H')|H$ since $H' = H|V$ and $K,L\subset \R^n=V$.
    Hence,
    \begin{equation*}
        (K|H) \cap (L|H) = [(K|H')|H] \cap [(L|H')|H] \subset [(K|H')\cap (L|H')]|H.
    \end{equation*}
    To prove the other inclusion we may assume that $H\cap (H')^\bot = \{o\}$, which holds true for almost all $H\in\Gr(H'+V^\bot,j)$ with respect to $\nu_j$. Let $z\in [(K|H')\cap (L|H')]|H$ be arbitrary. There exist $x\in K$ and $y\in L$ such that
     \begin{equation*}
         z = (x|H')|H = (y|H')|H = x|H = y|H.
     \end{equation*}
     Since $H\cap (H')^\bot=\{o\}$, the orthogonal projection from $H$ to $H'$ is a bijection. It follows that $x|H'=y|H'$, and therefore $x|H'\in(K|H')\cap (L|H')$. This implies that  $z= x|H = (x|H')|H \in [(K|H')\cap (L|H')]|H$. Thus the claim \eqref{eqn:claim} holds true.
\end{proof}

\subsection{Relation to  other intrinsic volume deviations}

The next result provides inequalities between the intrinsic volume metric $\delta_j$ and the intrinsic volume ``distances" $\rho_j$ and $\Delta_j$.

\begin{proposition}\label{IntVolCompThm}
For all convex bodies $K,L\in\cK_n^n$, the following statements hold:
\begin{itemize}
\item[(i)] If $K\subset L$ and $j\in[n]$, then $\delta_j(K,L)=\rho_j(K,L)=\Delta_j(K,L)=V_j(L)-V_j(K)$;

\item[(ii)] $\delta_j(K,L) \leq \min\{\rho_j(K,L),\Delta_j(K,L)\}$ for all $j\in[n]$;

\item[(iii)] $\delta_j(K,L) \geq |V_j(K)-V_j(L)|$ for all $j\in[n]$;

\item[(iv)] $\rho_n(K,L)\geq \delta_n(K,L)=\Delta_n(K,L)=\vol_n(K\triangle L)$;

\item[(v)] If $K\cap L\neq\varnothing$, then 
\begin{equation*}
    \Delta_1(K,L)\geq \delta_1(K,L)=\rho_1(K,L) = 2 \bbinom{n}{1} \int_{\mathbb{S}^{n-1}}|h_K(u)-h_L(u)|\,\dint\sigma(u).
\end{equation*}
\end{itemize}
\end{proposition}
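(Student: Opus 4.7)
The strategy is to reduce each claim to a pointwise inequality on an individual subspace $H\in\Gr(n,j)$ (so that it involves only $j$-dimensional Lebesgue measures of projections), then integrate and apply Kubota's formula \eqref{kubota} to recover statements about $V_j$, $\rho_j$ or $\Delta_j$.

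First I would handle (i). If $K\subset L$, then $K|H\subset L|H$ for every $H\in\Gr(n,j)$, hence $(K|H)\triangle(L|H)=(L|H)\setminus(K|H)$ and
\[
\vol_j((K|H)\triangle(L|H))=\vol_j(L|H)-\vol_j(K|H).
\]
Integrating against $\nu_j$, multiplying by $\tbbinom{n}{j}$, and applying \eqref{kubota} produces $V_j(L)-V_j(K)$, so $\delta_j(K,L)=V_j(L)-V_j(K)$. The equalities for $\rho_j$ and $\Delta_j$ follow from $[K,L]=L$ and $K\cap L=K$.

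Next, for (ii), fix $H\in\Gr(n,j)$ and notice that $(K\cap L)|H\subset (K|H)\cap(L|H)$ and $(K|H)\cup(L|H)\subset [K,L]|H$. The first inclusion gives
\[
\vol_j((K|H)\triangle(L|H))\leq \vol_j(K|H)+\vol_j(L|H)-2\vol_j((K\cap L)|H),
\]
which integrates to $\delta_j\leq\Delta_j$, and the second inclusion (rewriting $\vol_j((K|H)\triangle(L|H))=\vol_j(K|H)+\vol_j(L|H)-2\vol_j((K|H)\cap(L|H))$ and using $\vol_j(A\cup B)=\vol_j(A)+\vol_j(B)-\vol_j(A\cap B)$) gives $\delta_j\leq\rho_j$. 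Claim (iii) is the reverse triangle inequality $\vol_j((K|H)\triangle(L|H))\geq |\vol_j(K|H)-\vol_j(L|H)|$ integrated and combined with $|\int f|\leq\int|f|$ and Kubota. Claim (iv) is immediate, since $\Gr(n,n)=\{\R^n\}$ forces $\delta_n=\Delta_n$, and the inequality with $\rho_n$ is a special case of (ii).

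The main work is (v). For $j=1$, every $H\in\Gr(n,1)$ is a line $\ell_u=\operatorname{span}(u)$, and the projection of a convex body onto $\ell_u$ is the segment of length $h_K(u)+h_K(-u)$ with endpoints $-h_K(-u)u$ and $h_K(u)u$. The key observation (which uses the hypothesis $K\cap L\neq\varnothing$) is that these two segments overlap, so their symmetric difference has length
\[
|h_K(u)-h_L(u)|+|h_K(-u)-h_L(-u)|.
\]
Using that the natural map $\S^{n-1}\to\Gr(n,1)$ is $2$-to-$1$ and push-forward of $\sigma$ is $\nu_1$, together with the $u\mapsto -u$ symmetry of the integrand, gives
\[
\delta_1(K,L)=2\bbinom{n}{1}\int_{\S^{n-1}}|h_K(u)-h_L(u)|\,\dint\sigma(u).
\]
For $\rho_1$, I would use $h_{[K,L]}=\max(h_K,h_L)$ together with $V_1(M)=2\tbbinom{n}{1}\int_{\S^{n-1}} h_M(u)\,\dint\sigma(u)$, which follows from Kubota in the same way. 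Then $2h_{[K,L]}-h_K-h_L=|h_K-h_L|$ yields the same expression for $\rho_1$. The inequality $\Delta_1(K,L)\geq\delta_1(K,L)$ is (ii). The main obstacle is verifying the overlap of the two projected segments carefully from $K\cap L\neq\varnothing$, as this is what forces the simple additive formula for the length of the one-dimensional symmetric difference and is where the hypothesis is actually used.
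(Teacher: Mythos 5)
Your proof is correct, and while it arrives at the same conclusions as the paper, the route for parts (ii) and (iii) is genuinely different in flavor. The paper leverages the fact (established earlier in Theorem \ref{metricthm}) that $\delta_j$ is a metric: it writes $\Delta_j(K,L)=\delta_j(K,K\cap L)+\delta_j(L,K\cap L)$ and $\rho_j(K,L)=\delta_j(K,[K,L])+\delta_j(L,[K,L])$ via part (i), and then invokes the triangle inequality for $\delta_j$ to get (ii); for (iii) it inserts the term $\tbbinom{n}{j}\int\vol_j((K|H)\cap(L|H))\,\dint\nu_j(H)$ and uses the triangle inequality for real numbers. You instead argue fiberwise: in (ii) you use the inclusions $(K\cap L)|H\subset(K|H)\cap(L|H)$ and $(K|H)\cup(L|H)\subset[K,L]|H$ to get the pointwise inequalities and then integrate; in (iii) you use the reverse triangle inequality $\vol_j(A\triangle B)\geq|\vol_j A-\vol_j B|$ for Lebesgue measure and then $|\int f|\leq\int|f|$. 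Your approach is more elementary in that it never invokes the metric property of $\delta_j$, at the cost of re-deriving a couple of inclusion facts that the paper has already packaged; the paper's approach is shorter given its earlier machinery. Parts (i), (iv), and (v) are essentially identical to the paper's; in (v) the ``overlap'' you flag as the crux is immediate from the observation that any $z\in K\cap L$ projects to a common point of the two segments, which is exactly what the hypothesis $K\cap L\neq\varnothing$ is there to guarantee.
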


\begin{proof}
Part (i) follows immediately from the definitions. By (i),
\begin{alignat*}{2}
	\Delta_j(K,L) &= \delta_j(K,K\cap L) &&+ \delta_j(L,K\cap L)\\
	\rho_j(K,L) &= \delta_j(K,[K, L]) &&+ \delta_j(L,[K, L]).
\end{alignat*}
Since $\delta_j$ is a metric, statement (ii) now  follows from the triangle inequality. For (iii), 
note that by the triangle inequality,
\begin{align*}
|V_j(K)-V_j(L)|&\leq \left|V_j(K)-\bbinom{n}{j}\int_{\Gr(n,j)}\vol_j((K|H)\cap(L|H))\,\dint\nu_j(H)\right|\\
&+\left|V_j(L)-\bbinom{n}{j}\int_{\Gr(n,j)}\vol_j((K|H)\cap(L|H))\,\dint\nu_j(H)\right|=\delta_j(K,L).
\end{align*}
To prove (iv), note that for $j=n$,
\begin{align*}
\delta_n(K,L)=\Delta_n(K,L)&=\vol_n(K\triangle L)\\
&=2\vol_n(K\cup L)-\vol_n(K)-\vol_n(L)\\
&\leq 2\vol_n([K, L])-\vol_n(K)-\vol_n(L)\\
&=\rho_n(K,L)
\end{align*}
with equality if and only $K\cup L=[K, L]$.

To show (v), let  $\ell_u$ denote the line spanned by $u\in\S^{n-1}$, and let $h_K(u)=\max_{x\in K}\langle x,u\rangle$ be the support function of $K$ in the direction $u$. Then  
$\vol_1(K|\ell_u) = h_K(u)+h_K(-u)$, so
\begin{equation*}
    \vol_1((K|\ell_u)\triangle (L|\ell_u)) = \begin{cases}
\left|h_K(u)-h_L(u)\right| + \left|h_K(-u)-h_L(-u)\right| &\text{if $(K|\ell_u)\cap (L|\ell_u) \neq \varnothing$},\\       h_K(u)+h_K(-u) + h_L(u)+h_L(-u) &\text{if $(K|\ell_u) \cap(L|\ell_u) = \varnothing$}.
\end{cases}
\end{equation*}
Moreover, if $K\cap L\neq \varnothing$, then 
\begin{equation*}
    2\bbinom{n}{1}\int_{\mathbb{S}^{n-1}}|h_K(u)-h_L(u)|\,\dint\sigma(u)= \bbinom{n}{1} \int_{\S^{n-1}} \vol_1((K|\ell_u)\triangle (L|\ell_u)) \, \dint\sigma(u)=\delta_1(K,L).
\end{equation*}
Furthermore, by $(K|\ell_u)\cup(L|\ell_u)=[K,L]|\ell_u$ 
and the  Kubota formula,
\begin{align*}
    \delta_1(K,L)
    &=\bbinom{n}{1}\int_{\mathbb{S}^{n-1}}\big(2\vol_1((K|\ell_u)\cup(L|\ell_u))-\vol_1(K|\ell_u)-\vol_1(L|\ell_u)\big)\,\dint\sigma(u)\\
    &=2V_1([K, L])-V_1(K)-V_1(L)=\rho_1(K,L).
\end{align*}
Finally, the inequality $\Delta_1(K,L)\geq 2\tbbinom{n}{1}\int_{\mathbb{S}^{n-1}}|h_K(u)-h_L(u)|\,\dint\sigma(u)$ 
was shown in  \cite[Thm.\ C1]{BHK}.
\end{proof}

\begin{remark}
In view of Lutwak's \cite{Lutwak1975} dual volumes  $\widetilde{V}_1(K),\ldots,\widetilde{V}_n(K)$ of a convex body $K$ in $\R^n$ and the dual Kubota formula (see, for example, \cite[Sec. 9.3]{SchneiderBook}), for $j\in[n]$ one may also define the \emph{dual volume metric} $\widetilde{\delta}_j:\cK^n\times\cK^n\to[0,\infty)$ by
\[
\widetilde{\delta}_j(K,L):=\bbinom{n}{j}\int_{\Gr(n,j)}\vol_j((K\cap H)\triangle(L\cap H))\,\dint\nu_j(H), \quad K,L\in\cK^n.
\]
We leave it as an exercise to check that $\widetilde{\delta}_j$ is a metric for every $j\in[n]$. In the special case $L=B_n$, it coincides with the $j$th dual volume deviation $\widetilde{\Delta}_j$ defined in \cite{BHK}:
\[
\widetilde{\delta}_j(K,B_n)=\widetilde{V}_j(K)+\widetilde{V}_j(B_n)-2\widetilde{V}_j(K\cap B_n)=:\widetilde{\Delta}_j(K,B_n).
\]
For a $C^2$ convex body $K$ that contains the origin in its interior, limit formulas were established in \cite[Thm. 7]{BHK} for the asymptotic best approximations of $K$ by inscribed, circumscribed and arbitrarily positioned polytopes with respect to $\widetilde{\Delta}_j$.
\end{remark}


\section{Background and lemmas}\label{sec-background-lemmas}

Theorem \ref{mainThm} will be proved using a probabilistic argument, with tools from stochastic and integral geometry. For background on these areas, we refer the reader to  the book \cite{Klain-Rota} by Klain and Rota and the book  \cite{SchneiderWeilBook} by Schneider and Weil, for example.

\subsection{Affine Blaschke--Petkantschin formula}

This result, which is one of the main ingredients of the proof of Theorem \ref{mainThm}, can be found in \cite[Thm. 4]{Miles1971} (see also \cite[Thm.\ 7.2.7]{SchneiderWeilBook}). Recall in the following that $\sigma$ is the uniform probability measure on $\S^{n-1}$.

\begin{lemma}\label{BPext}
For $n\in\mathbb{N}$ with $n\geq 2$, let $g:(B_n)^n\to\mathbb{R}$ be a nonnegative measurable function. Then
\begin{align*}
    &\int_{B_n}\cdots \int_{B_n} g(x_1,\ldots,x_n)\, \dint x_1\ldots \dint x_n \\
    &\quad= n!\vol_n(B_n) \int_{\S^{n-1}}\int_0^1\int_{B_n\cap H}\!\!\!\cdots\int_{B_n\cap H}\!\!\!\!\!g(x_1,\ldots,x_n)\vol_{n-1}([x_1,\ldots,x_n])\,\dint x_1\ldots \dint x_n\,\dint h\,\dint \sigma(u),
\end{align*}
where $H=H(u,h)=\{x\in\mathbb{R}^n:\langle x,u\rangle=h\}$ is the hyperplane orthogonal to $u\in\S^{n-1}$ at distance $h$ from the origin and $[x_1,\dotsc,x_n]$ is the convex hull of the points $x_1,\dotsc,x_n$.
\end{lemma}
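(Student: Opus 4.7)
The plan is to derive this affine Blaschke--Petkantschin formula by a change of variables that decomposes integration over $n$-tuples in $\R^n$ into integration over the affine hyperplane they span, together with integration of the points inside it. The key observation is that for Lebesgue-almost every $(x_1,\dotsc,x_n)\in(\R^n)^n$, the points are in general position and hence span a unique affine hyperplane $H\subset\R^n$, which can be parametrized by its unit normal $u\in\S^{n-1}$ and signed distance $h\in\R$ from the origin via $H(u,h)=\{x:\langle x,u\rangle = h\}$. The complement of this generic set has Lebesgue measure zero and contributes nothing to either side.

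First I would parametrize the set of affine hyperplanes meeting $B_n$ by $(u,h)\in\S^{n-1}\times[0,1]$, noting that $H(u,h)=H(-u,-h)$, so restricting to $h\geq 0$ picks each such hyperplane exactly once (off a measure-zero set of hyperplanes through the origin). On the open dense set of $n$-tuples in general position, I would then apply the change of variables
\[
    \Phi:(x_1,\dotsc,x_n)\longmapsto (u,\,h,\,y_1,\dotsc,y_n),
\]
where $(u,h)$ encodes the affine hyperplane $H$ spanned by $x_1,\dotsc,x_n$ and each $y_i$ is $x_i$ viewed in the $(n-1)$-dimensional hyperplane $H$. A dimension count confirms this is a map between spaces of dimension $n^2$.

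The crucial step is the Jacobian computation. Expanding each $x_i$ in a moving frame consisting of an orthonormal basis of $H$ together with the normal $u$, differentiating the defining relations $\langle x_i,u\rangle=h$ in $u$ and $h$, and evaluating the resulting block determinant, one finds that the Jacobian of $\Phi$ equals, up to sign, a constant times $\vol_{n-1}([x_1,\dotsc,x_n])$, with the combinatorial factor $(n-1)!$ arising from the affine dependence of the $n$ points in an $(n-1)$-dimensional space. Converting the (unnormalized) surface measure on $\S^{n-1}$ that appears naturally in this parametrization into the probability measure $\sigma$ costs a factor $\omega_n=n\vol_n(B_n)$, and combining these yields the overall prefactor $n!\vol_n(B_n)$.

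The restriction of every $x_i$ to $B_n$ forces $H$ to meet $B_n$, hence $h\in[0,1]$, and also $x_i\in B_n\cap H$, giving the inner integration domain on the right-hand side. The main obstacle is the bookkeeping in the Jacobian computation, in particular the careful tracking of the moving frame on $H$ and of the normalization constants relating $\mathrm{d}\sigma$, the Haar measure on the affine Grassmannian, and Lebesgue measure on $H$. Since this calculation is classical and worked out in detail in \cite[Thm.~4]{Miles1971} and \cite[Thm.~7.2.7]{SchneiderWeilBook}, the proof of the lemma amounts to specializing the general affine Blaschke--Petkantschin formula to the case $q=n-1$ and to integrands supported on $B_n^n$.
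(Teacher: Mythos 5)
The paper offers no proof of this lemma at all—it simply cites Miles \cite[Thm.~4]{Miles1971} and Schneider--Weil \cite[Thm.~7.2.7]{SchneiderWeilBook}—and your proposal does essentially the same thing, specializing the general affine Blaschke--Petkantschin formula to $q=n-1$ with a correct accounting of the constant ($(n-1)!$ from the Jacobian times $\omega_n=n\vol_n(B_n)$ from normalizing the spherical measure, giving $n!\vol_n(B_n)$). Your added sketch of the change of variables and the half-sphere/signed-distance bookkeeping is sound, so this matches the paper's treatment.
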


\subsection{The random beta polytope model}

Let $X_1,\ldots,X_N$ be independent and identically distributed (i.i.d.) points chosen from $\R^n$ according to the beta distribution, which for a parameter $\beta>-1$ has the density
\begin{equation}\label{beta-density}
f_{n,\beta}(x)=c_{n,\beta}(1-\|x\|^2)^\beta \mathbbm{1}_{\{x: \|x\|<1\}}(x),
\end{equation}
where $c_{n,\beta}$ is a normalization constant given by
\begin{equation}\label{cnb-constant}
c_{n,\beta}=\frac{\Gamma\left(\frac{n}{2}+\beta+1\right)}{\pi^{\frac{n}{2}}\Gamma(\beta+1)}.
\end{equation}
A \emph{random beta polytope} $P_{n,N}^\beta$ is the convex hull of the $X_i$, which is denoted by $[X_1,\ldots,X_N]$. In particular, the uniform probability distribution on the unit ball $B_n$ is given by the $\beta=0$ distribution. Moreover, it was shown by Kabluchko, Temesvari and Th\"ale \cite[Proof of Cor.\ 3.9]{KabluchkoEtAl2019} that the uniform probability measure $\sigma$ on the sphere $\mathbb{S}^{n-1}$ is the weak limit of the beta distribution as $\beta\to -1^+$. 

\subsection{Projections of beta densities onto subspaces}

The next result, which can be found in \cite[Sec.\ 4.2]{KabluchkoEtAl2019}, states that the orthogonal projection of a beta distribution onto a linear subspace yields another beta distribution, but with different parameters. Since the beta distribution is rotationally invariant, without loss of generality the result will be stated for the $j$-dimensional subspace
$L_0:=\{x\in\R^n:\, x_{j+1}=\cdots=x_n=0\}$.

\begin{lemma}[{\cite[Lem.\ 4.4 a)]{KabluchkoEtAl2019}}]\label{KTT-proj1}
If the random variable $X$ has density $f_{n,\beta}$ in $\R^n$, then the orthogonal projection $X|L_0$ has density $f_{j,\beta+\frac{n-j}{2}}$ in $L_0\cong \R^j$.
\end{lemma}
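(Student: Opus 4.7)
The plan is to compute the marginal density of $X|L_0$ directly by integrating out the last $n-j$ coordinates via Fubini, and then to recognize the resulting expression as a beta density with shifted parameter. Write $x=(y,z)\in\R^j\times\R^{n-j}$, identifying $L_0\cong\R^j$ and $L_0^\bot\cong\R^{n-j}$. Since $\|x\|^2=\|y\|^2+\|z\|^2$, the density of $X|L_0$ at $y$ is
\begin{equation*}
g(y) \;=\; \int_{L_0^\bot} f_{n,\beta}(y,z)\,\dint z \;=\; c_{n,\beta}\int_{\{z\in\R^{n-j}:\,\|z\|^2<1-\|y\|^2\}}\!\!\!\!(1-\|y\|^2-\|z\|^2)^\beta\,\dint z,
\end{equation*}
which is automatically supported on $\{y:\|y\|<1\}$.

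The next step is the radial rescaling $z=\sqrt{1-\|y\|^2}\,w$, valid for $\|y\|<1$. Under this substitution $1-\|y\|^2-\|z\|^2=(1-\|y\|^2)(1-\|w\|^2)$, the Jacobian contributes a factor $(1-\|y\|^2)^{(n-j)/2}$, and the domain of integration becomes the unit ball $B_{n-j}\subset\R^{n-j}$. Therefore
\begin{equation*}
g(y) \;=\; c_{n,\beta}\,(1-\|y\|^2)^{\beta+\frac{n-j}{2}}\int_{B_{n-j}}(1-\|w\|^2)^\beta\,\dint w \;=\; \frac{c_{n,\beta}}{c_{n-j,\beta}}\,(1-\|y\|^2)^{\beta+\frac{n-j}{2}},
\end{equation*}
where the last equality uses that $\int_{B_{n-j}}(1-\|w\|^2)^\beta\,\dint w=1/c_{n-j,\beta}$ by the normalization of $f_{n-j,\beta}$.

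It remains to verify that the constant matches $c_{j,\beta+\frac{n-j}{2}}$. From the explicit form \eqref{cnb-constant},
\begin{equation*}
\frac{c_{n,\beta}}{c_{n-j,\beta}} \;=\; \frac{\Gamma\bigl(\tfrac{n}{2}+\beta+1\bigr)}{\pi^{j/2}\,\Gamma\bigl(\tfrac{n-j}{2}+\beta+1\bigr)} \;=\; \frac{\Gamma\bigl(\tfrac{j}{2}+\bigl(\beta+\tfrac{n-j}{2}\bigr)+1\bigr)}{\pi^{j/2}\,\Gamma\bigl(\bigl(\beta+\tfrac{n-j}{2}\bigr)+1\bigr)} \;=\; c_{j,\beta+\frac{n-j}{2}},
\end{equation*}
which closes the argument. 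I do not anticipate a genuine obstacle here: the computation is a textbook marginalization of a rotationally invariant density, and the only points requiring care are (a) tracking the Jacobian of the radial rescaling so that the remaining integral is \emph{exactly} a beta normalization in dimension $n-j$, and (b) cleanly matching the gamma-function ratio to the target constant.
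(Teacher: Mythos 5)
Your proof is correct; the key substitution $z=\sqrt{1-\|y\|^2}\,w$ correctly produces the exponent shift $\beta\mapsto\beta+\tfrac{n-j}{2}$, and the gamma-ratio identification with $c_{j,\beta+\frac{n-j}{2}}$ checks out (the $\Gamma(\beta+1)$ factors cancel). Note that the paper itself does not prove this lemma but cites it directly from Kabluchko, Temesvari and Thäle, and your marginalization argument is the standard one for this result.
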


\subsection{Moments of random beta simplices}

 The next result is due to Miles \cite[Equation (72)]{Miles1971} (see also    \cite[Proposition 2.8(a)]{KabluchkoEtAl2019}) and  gives the second moment of the volume of a random beta polytope. 

\begin{lemma}\label{beta-Moments}
Let $\mathcal{V}_{n,n-1}$ denote the $(n-1)$-dimensional volume of the $(n-1)$-dimensional simplex with vertices $X_1,\ldots,X_{n}$ chosen independently  from $B_{n-1}$ according to the beta distribution, that is,  $\mathcal{V}_{n,n-1}:=\vol_{n-1}(P_{n-1,n}^\beta)$. Then the second moment of $\mathcal{V}_{n,n-1}$, denoted $\E[\mathcal{V}_{n,n-1}^2]$, is given by
\begin{align*}
    \E[\mathcal{V}_{n,n-1}^2] 
    =\frac{n}{(n-1)!(n+2\beta+1)^{n-1}}.
\end{align*}
\end{lemma}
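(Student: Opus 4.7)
My plan is to follow Miles's classical approach, also detailed in Kabluchko--Temesvari--Th\"ale (Prop.\ 2.8(a)), which computes moments of the beta simplex volume via the affine Blaschke--Petkantschin formula.

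The key reduction is as follows. Apply Lemma~\ref{BPext} to $n$ i.i.d.\ points $X_1,\ldots,X_n\in\R^n$ with density $f_{n,\beta}$, taking the test function $g(x)=V(x)^{k-1}\prod_{i=1}^n f_{n,\beta}(x_i)$ where $V(x)=\vol_{n-1}([x_1,\ldots,x_n])$. Parameterizing the hyperplane $H(u,h)$ by $x_i=hu+\sqrt{1-h^2}\,y_i$ with $y_i\in B_{n-1}$, the scaling identities
\begin{align*}
f_{n,\beta}(x_i)&=c_{n,\beta}(1-h^2)^\beta(1-\|y_i\|^2)^\beta,\\
V(x)^k&=(1-h^2)^{k(n-1)/2}V(y)^k,\\
\dint x_i&=(1-h^2)^{(n-1)/2}\,\dint y_i,
\end{align*}
together with rotational invariance of $f_{n,\beta}$ in $u$, collapse the inner BP integral into a one-dimensional integral in $h$ times $\mathbb{E}_{f_{n-1,\beta}^n}[V^k]$. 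One obtains the reduction
\begin{equation*}
    \mathbb{E}_{f_{n,\beta}^n}[V^{k-1}] = A_{n,k,\beta}\,\mathbb{E}_{f_{n-1,\beta}^n}[V^k],
\end{equation*}
where $A_{n,k,\beta}$ is an explicit product of $n!\vol_n(B_n)$, the ratio $(c_{n,\beta}/c_{n-1,\beta})^n$, and the one-dimensional beta integral $\int_0^1(1-h^2)^{n\beta+(k+n)(n-1)/2}\,\dint h$.

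Taking $k=1$ with the trivial base case $\mathbb{E}[V^0]=1$ yields the first moment $\mathbb{E}_{f_{n-1,\beta}^n}[V] = 1/A_{n,1,\beta}$. Taking $k=2$ expresses $\mathbb{E}[\mathcal{V}_{n,n-1}^2] = \mathbb{E}_{f_{n,\beta}^n}[V]/A_{n,2,\beta}$ in terms of the auxiliary first moment $\mathbb{E}_{f_{n,\beta}^n}[V]$ (the expected $(n-1)$-volume of the simplex formed by $n$ i.i.d.\ beta points in $\R^n$), which is likewise obtained by a BP argument. Evaluating the beta integrals via $\int_0^1(1-h^2)^a\,\dint h = \tfrac{\sqrt{\pi}}{2}\Gamma(a+1)/\Gamma(a+3/2)$ and simplifying the $\Gamma$-function products (via the Legendre duplication formula and $\Gamma(a+1)=a\Gamma(a)$) gives the claimed closed form.

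The main obstacle is the Gamma-function bookkeeping in the final simplification: the various $\Gamma$-ratios arising from beta integrals, normalizations $c_{m,\beta}$, and volumes $\vol_m(B_m)$ must collapse cleanly, with the $\beta$-dependence consolidating into the single factor $(n+2\beta+1)^{n-1}$ in the denominator and all other factors reducing to $n/(n-1)!$. This step, although entirely algebraic once the BP setup is in place, requires careful tracking of the exponents of $(1-h^2)$ through the scalings.
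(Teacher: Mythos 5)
The paper does not prove this lemma; it is cited to Miles \cite[Eq.\ (72)]{Miles1971} and Kabluchko--Temesvari--Th\"ale \cite[Prop.\ 2.8(a)]{KabluchkoEtAl2019}, so there is no in-paper proof to compare against. That said, your proposed Blaschke--Petkantschin recursion has a genuine gap. Writing $\mathcal{I}_{k-1}:=\E_{f_{n,\beta}^{\otimes n}}[V^{k-1}]$ for $n$ points in $\R^n$ and $\mathcal{J}_k:=\E_{f_{n-1,\beta}^{\otimes n}}[V^k]$ for $n$ points in $\R^{n-1}$, Lemma~\ref{BPext} applied as you describe yields exactly one relation per $k$, namely $\mathcal{I}_{k-1}=A_{n,k,\beta}\,\mathcal{J}_k$. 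At $k=1$ this gives $\mathcal{J}_1=1/A_{n,1,\beta}$, which is fine. At $k=2$ you obtain $\mathcal{J}_2=\mathcal{I}_1/A_{n,2,\beta}$, but your claim that the auxiliary moment $\mathcal{I}_1$ is ``likewise obtained by a BP argument'' is circular with the only BP tool available in the paper: applying Lemma~\ref{BPext} with $g=V\prod_i f_{n,\beta}(x_i)$ reproduces $\mathcal{I}_1=A_{n,2,\beta}\,\mathcal{J}_2$, the very equation you are trying to solve. To break the cycle you need the affine BP formula for $n$ points in an ambient space of dimension larger than $n$ (or Miles's full nested recursion through increasing ambient dimension), neither of which is stated in the paper; the quantity $\mathcal{I}_1$, being the expectation of a square root of a Gram determinant, has no elementary closed form otherwise.

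A self-contained alternative avoids BP entirely. Set $R_i=(1,X_i^\top)\in\R^n$, $i=1,\dotsc,n$, so that $\mathcal{V}_{n,n-1}=\frac{1}{(n-1)!}|\det(R_1,\dotsc,R_n)|$. For i.i.d.\ rows $R_i$ one has the exact identity $\E[\det(R_1,\dotsc,R_n)^2]=n!\det\bigl(\E[R_1R_1^\top]\bigr)$, which is proved by expanding $\det(MM^\top)$ and using independence plus symmetry. Since the beta density on $B_{n-1}$ is centered and rotation invariant, $\E[R_1R_1^\top]=\mathrm{diag}(1,\sigma^2,\dotsc,\sigma^2)$ with $\sigma^2=\frac{1}{n-1}\E\|X_1\|^2$. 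A one-line beta-function computation gives $\E\|X_1\|^2=\frac{n-1}{n+2\beta+1}$, hence $\sigma^2=\frac{1}{n+2\beta+1}$, and therefore $\E[\mathcal{V}_{n,n-1}^2]=\frac{n!}{((n-1)!)^2}\sigma^{2(n-1)}=\frac{n}{(n-1)!(n+2\beta+1)^{n-1}}$, which is the claimed formula. This argument is shorter, requires no integral-geometric machinery, and sidesteps the circularity in the BP recursion.
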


We will also need the following formula for the second moment of the weighted volume of a random beta simplex chosen according to the density $f_{n,\beta}$ restricted to a hyperplane. This result is a special case of a result of Kabluchko, Temesvari and Thäle \cite[Lem.\ 4.6]{KabluchkoEtAl2019}, which gives a formula for all nonnegative moments. Here and in the following we set $\dint\P_\beta^H(x)=f_{n,\beta}(x)\, \dint x$ for all $x\in H$, that is, $\P_\beta^H$ is the measure on $H$ that is obtained by the restriction of the beta measure to the affine hyperplane $H$.
\begin{lemma}[{\cite[Lem.\ 4.6]{KabluchkoEtAl2019}}] \label{2nd-moment}
Let $H$ be an $(n-1)$-dimensional hyperplane in $\R^n$ with distance $h$ from the origin. Then for all $h\in[0,1]$,
\begin{align*}
    \int_{B_n\cap H}\!\!\!\! \!\cdots\int_{B_n\cap H} \!\!\!\!\!\! \vol_{n-1}([x_1, \ldots, x_n])^2\, \dint\P_\beta^H(x_1) \cdots \dint \P_\beta^H(x_n)
    =\frac{c_{n,\beta}^n}{c_{n-1,\beta}^n}(1-h^2)^{n\beta+\frac{(n-1)(n+2)}{2}}\E[\mathcal{V}_{n,n-1}^2].
\end{align*}
\end{lemma}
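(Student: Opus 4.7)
The plan is to reduce the $n$-fold integral over $B_n\cap H$ against the restricted beta measure $\P_\beta^H$ to an $n$-fold integral over $B_{n-1}$ against the ordinary $(n-1)$-dimensional beta measure, via a single translation-and-rescaling that carries $H$ onto a linear subspace and $B_n\cap H$ onto $B_{n-1}$. Tracking how the beta density and the squared $(n-1)$-volume rescale should produce exactly the factor $(1-h^2)^{n\beta+(n-1)(n+2)/2}$, with $\E[\mathcal{V}_{n,n-1}^2]$ left over as the residual integral.

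First, I would fix a unit normal $u\in\S^{n-1}$ to $H$, write $H=u^\perp+hu$, and set $r:=\sqrt{1-h^2}$, so every $x\in H$ has a unique decomposition $x=y+hu$ with $y\in u^\perp\cong\R^{n-1}$, and $B_n\cap H$ becomes the ball of radius $r$ in $u^\perp$. Substituting $z:=y/r\in B_{n-1}$ (Jacobian $r^{n-1}$) and using $\|x\|^2=h^2+r^2\|z\|^2$, hence $1-\|x\|^2=r^2(1-\|z\|^2)$, I obtain
\begin{equation*}
\dint\P_\beta^H(x) \;=\; c_{n,\beta}(1-\|x\|^2)^\beta\,\dint y \;=\; \frac{c_{n,\beta}}{c_{n-1,\beta}}\, r^{\,n-1+2\beta}\,\dint\P_\beta(z),
\end{equation*}
where $\P_\beta$ on the right-hand side denotes the $(n-1)$-dimensional beta distribution on $B_{n-1}\subset u^\perp$.

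Next, by the translation invariance and positive $(n-1)$-homogeneity of $\vol_{n-1}$,
\begin{equation*}
\vol_{n-1}([x_1,\ldots,x_n]) \;=\; \vol_{n-1}([y_1,\ldots,y_n]) \;=\; r^{\,n-1}\,\vol_{n-1}([z_1,\ldots,z_n]),
\end{equation*}
so the squared volume contributes a factor $r^{2(n-1)}$. Collecting the $n$ density factors together with this gives a total $r$-exponent of $2(n-1)+n(n-1+2\beta)=(n-1)(n+2)+2n\beta$, i.e.\ the prefactor $(1-h^2)^{(n-1)(n+2)/2+n\beta}$, which matches the asserted power. The remaining integral
\begin{equation*}
\int_{B_{n-1}}\!\!\!\cdots\!\int_{B_{n-1}}\!\vol_{n-1}([z_1,\ldots,z_n])^2\,\dint\P_\beta(z_1)\cdots\dint\P_\beta(z_n) \;=\; \E[\mathcal{V}_{n,n-1}^2]
\end{equation*}
is precisely the second moment appearing in Lemma \ref{beta-Moments}, yielding the claim.

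The main (and essentially only) obstacle is the bookkeeping of the $r$-exponent; conceptually no Blaschke--Petkantschin reduction is needed here, since the $n$ points already lie inside the $(n-1)$-dimensional affine subspace $H$, so the integrand is tautologically an $(n-1)$-dimensional object that rescales cleanly to $B_{n-1}$. A minor care point is making sure the beta normalization $c_{n,\beta}/c_{n-1,\beta}$ is recorded with the correct power of $n$ when the density factors are multiplied across the $n$ iid points.
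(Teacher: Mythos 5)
Your proof is correct, and it is essentially the standard derivation. Note that the paper does not supply its own argument here—it simply cites \cite[Lem.~4.6]{KabluchkoEtAl2019}—so what you have given is an independent verification. The mechanism you use (translate $H$ to the linear subspace $u^\perp$, rescale by $r=\sqrt{1-h^2}$ onto $B_{n-1}$, and track how the beta density and the $(n-1)$-homogeneous squared volume transform) is the natural route, and your exponent bookkeeping checks out: $n(n-1+2\beta)$ from the $n$ densities plus $2(n-1)$ from the squared volume gives $(n-1)(n+2)+2n\beta$, matching the claimed power of $(1-h^2)^{1/2}$, while the $c_{n,\beta}/c_{n-1,\beta}$ ratio correctly appears to the $n$th power. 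You are also right that no Blaschke--Petkantschin step is needed since the points already live in $H$; the only thing to be careful about is that $\dint\P_\beta^H$ is by definition the full $n$-dimensional density $f_{n,\beta}$ restricted to $H$ and integrated against $(n-1)$-dimensional Lebesgue measure, which is exactly how you have treated it.
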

Note that by the Legendre duplication formula,
\[
\frac{c_{n,\beta}}{c_{n-1,\beta}}=2^{-(n+2\beta)}\, \Gamma(n+2\beta+1)\, \Gamma\!\left(\frac{n+2\beta+1}{2}\right)^{\!-2}.
\]
\subsection{Results for the beta measure of a cap of the Euclidean ball}

We follow the definitions and notation in \cite[Sec.\ 4.3]{KabluchkoEtAl2019}. Let $A$ be a measurable subset of $\R^n$. The \emph{probability content} $\mathbb{P}_\beta(A)$ of $A$ is the probability that a random vector with beta distribution attains a value in $A$. For $h\in\R$, define the hyperplane $E_h:=\{x\in\R^n:\, x_n=h\}$, and its two closed halfspaces
\[
E_h^p:=\{x\in\R^n:\,x_n\geq h\}\quad\text{and}\quad E_h^m:=\{x\in\R^n:\,x_n\leq h\}.
\]
Then
\[
\mathbb{P}_\beta(E_h^p)=\int_h^1\int_{E_t}f_{n,\beta}(x)\, \dint x\, \dint t \quad\text{and}\quad \mathbb{P}_\beta(E_h^m)=\int_{-1}^h\int_{E_t}f_{n,\beta}(x)\, \dint x\, \dint t.
\]
The next lemma, also due to Kabluchko, Temesvari and Th\"ale \cite[Lem.\ 4.5]{KabluchkoEtAl2019}, provides a formula for these probability contents in terms of the cumulative distribution function $F_{1,\beta}$ of the one-dimensional beta distribution,
\[
F_{1,\beta}(h) = c_{1,\beta}\int_{-1}^h(1-x^2)^\beta\, \dint x.
\]

\begin{lemma}[{\cite[Lem.\ 4.5]{KabluchkoEtAl2019}}] \label{PC-caps}
Let $h\in\R$ and consider the affine hyperplane $E_h=\{x\in\R^n:\,x_n=h\}$. For the beta distribution on $B_n$ with parameter $\beta>-1$, we have
\[
\mathbb{P}_\beta(E_h^p)=1-F_{1,\beta+\frac{n-1}{2}}(h)\quad\text{and}\quad \mathbb{P}_\beta(E_h^m)=F_{1,\beta+\frac{n-1}{2}}(h), \quad h\in[-1,1].
\]
\end{lemma}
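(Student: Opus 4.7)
The plan is to recognize that this lemma is really a statement about the marginal of the beta density $f_{n,\beta}$ on $B_n$ along a coordinate axis. The half-space $E_h^p$ is determined solely by the last coordinate $x_n$, so $\mathbb{P}_\beta(E_h^p) = \mathbb{P}(X_n \geq h)$ for $X$ with density $f_{n,\beta}$. Thus everything reduces to identifying the density of the one-dimensional marginal $X_n = X \vert \mathrm{span}(e_n)$.

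For this, I would appeal directly to Lemma \ref{KTT-proj1} (the projection lemma of Kabluchko--Temesvari--Th\"ale), applied with $j = 1$ and the subspace chosen to be the $x_n$-axis (which is legitimate by the rotational invariance of $f_{n,\beta}$). That lemma tells us $X_n$ has density $f_{1, \beta + \frac{n-1}{2}}$ on $[-1,1]$. Consequently
\begin{equation*}
\mathbb{P}_\beta(E_h^p) = \int_h^1 f_{1, \beta + \frac{n-1}{2}}(t)\,\mathrm{d}t = 1 - F_{1,\beta + \frac{n-1}{2}}(h),
\end{equation*}
and the complementary identity for $\mathbb{P}_\beta(E_h^m)$ follows immediately since $F_{1,\beta + \frac{n-1}{2}}(h) + (1 - F_{1,\beta + \frac{n-1}{2}}(h)) = 1$.

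If one preferred a self-contained argument instead of citing Lemma \ref{KTT-proj1}, the plan would be a direct Fubini computation: for fixed $t \in [-1,1]$ the slice $E_t \cap B_n$ is the $(n-1)$-ball of radius $\sqrt{1-t^2}$ in the hyperplane $\{x_n = t\}$, so after rescaling $x' = \sqrt{1-t^2}\, y$ one obtains
\begin{equation*}
\int_{E_t \cap B_n} f_{n,\beta}(x',t)\,\mathrm{d}x' = \frac{c_{n,\beta}}{c_{n-1,\beta}}\,(1-t^2)^{\beta + \frac{n-1}{2}}.
\end{equation*}
The main (and only slightly fiddly) step is then to verify via the explicit formula \eqref{cnb-constant} and the Legendre duplication formula that $c_{n,\beta}/c_{n-1,\beta}$ coincides with $c_{1,\beta + \frac{n-1}{2}}$, after which one recognizes the resulting integrand as the density $f_{1,\beta + \frac{n-1}{2}}(t)$ and integrates over $[h,1]$ or $[-1,h]$ to conclude.

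I do not expect any serious obstacle here: the lemma is essentially a corollary of the projection property of beta distributions. The one place to be careful is the bookkeeping of gamma factors when checking $c_{n,\beta}/c_{n-1,\beta} = c_{1,\beta + (n-1)/2}$, which is where the "$\frac{n-1}{2}$" shift in the parameter emerges — exactly matching the statement of the lemma.
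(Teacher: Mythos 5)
Your proposal is correct. Note that the paper does not prove this lemma at all---it is imported verbatim as \cite[Lem.~4.5]{KabluchkoEtAl2019}---so you have actually supplied a proof where the paper gives only a citation. Your primary route, viewing $\mathbb{P}_\beta(E_h^p)$ as $\mathbb{P}(X_n \geq h)$ and invoking Lemma \ref{KTT-proj1} with $j=1$ (via rotational invariance of $f_{n,\beta}$) to identify the one-dimensional marginal density as $f_{1,\beta+\frac{n-1}{2}}$, is clean and fully rigorous, and indeed mirrors how the two companion lemmas in the cited reference fit together. Your backup Fubini computation is also correct; one small remark is that the identity $c_{n,\beta}/c_{n-1,\beta} = c_{1,\beta+\frac{n-1}{2}}$ does not actually require the Legendre duplication formula---writing both sides via \eqref{cnb-constant} shows each equals $\Gamma(\tfrac{n}{2}+\beta+1)\big/\big(\pi^{1/2}\,\Gamma(\tfrac{n-1}{2}+\beta+1)\big)$ directly, so the gamma-factor bookkeeping is even lighter than you anticipated.
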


For an affine hyperplane $H=H(h)$ in $\R^n$ with distance $h\in(0,1)$ from the origin $o$, let $H^+$ be the halfspace of $H$ that contains $o$ and let $H^-$ be the halfspace that does not contain $o$. A \emph{cap} of $B_n$ is the intersection of $B_n$ with $H^-$. Note that by the rotational invariance of the beta distribution, for any $h\in(0,1)$ we have $\mathbb{P}_\beta(E_h^p)=\mathbb{P}_\beta(B_n\cap H^-)$. The base of the cap $B_n\cap H^-$ is the $(n-1)$-dimensional Euclidean ball $B_n\cap H$ with radius $r=\sqrt{1-h^2}$. In the next result, we combine \cite[Lem. 1.5]{SW2003} with \cite[Lem. 4.5]{KabluchkoEtAl2019} to derive the beta measure of a cap with respect to its base height.

\begin{lemma}\label{caps-lem}
Let  $\beta\geq -1$. For an affine hyperplane $H=H(h)$  in $\R^n$ with distance $h$ from the origin, let $v=\mathbb{P}_\beta(B_n\cap H^-)=\int_{B_n\cap H^-}f_{n,\beta}(x)\,\dint x$ be the beta measure of the cap of $B_n$ cut off by $H^-$. Then 
\[
\frac{dv}{dh}=-c_{1,\beta+\frac{n-1}{2}}(1-h^2)^{\beta+\frac{n-1}{2}}.
\]
\end{lemma}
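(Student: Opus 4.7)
The plan is to combine Lemma \ref{PC-caps} with the fundamental theorem of calculus. The computation is short, so the main task is just to set up the reduction cleanly.

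First, I would invoke the rotational invariance of the beta distribution on $B_n$ to reduce to the canonical case $H = E_h = \{x \in \R^n : x_n = h\}$, in which case $H^-$ (the halfspace not containing the origin, for $h > 0$) coincides with $E_h^p = \{x \in \R^n : x_n \geq h\}$. Under this identification,
\begin{equation*}
    v = v(h) = \P_\beta(B_n \cap H^-) = \P_\beta(E_h^p).
\end{equation*}

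Next, I would apply Lemma \ref{PC-caps} directly to obtain the closed form
\begin{equation*}
    v(h) = 1 - F_{1,\beta + \frac{n-1}{2}}(h),
\end{equation*}
valid for $h \in [-1,1]$ (which covers the range of interest $h \in (0,1)$). Recalling the definition
\begin{equation*}
    F_{1,\gamma}(h) = c_{1,\gamma}\int_{-1}^h (1-x^2)^\gamma \, \dint x,
\end{equation*}
the fundamental theorem of calculus yields $F_{1,\gamma}'(h) = c_{1,\gamma}(1-h^2)^\gamma$. Specializing to $\gamma = \beta + \frac{n-1}{2}$ and differentiating the expression for $v(h)$ gives
\begin{equation*}
    \frac{dv}{dh} = -c_{1,\beta+\frac{n-1}{2}} (1-h^2)^{\beta+\frac{n-1}{2}},
\end{equation*}
as required.

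There is no real obstacle here: once the reduction to $H = E_h$ has been made, the formula is a one-line consequence of the preceding lemma and the definition of $F_{1,\gamma}$. The only minor point worth flagging is the need to verify that $\beta + \frac{n-1}{2} > -1$ so that $F_{1,\beta+\frac{n-1}{2}}$ is well-defined; this holds since the hypothesis $\beta \geq -1$ and $n \geq 2$ give $\beta + \frac{n-1}{2} \geq -\frac{1}{2} > -1$ (and the edge case $n = 1$ does not arise in the setting of caps of $B_n$).
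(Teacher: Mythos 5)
Your argument for $\beta > -1$ is essentially identical to the paper's: reduce by rotational invariance, invoke Lemma~\ref{PC-caps} to get $v(h) = 1 - F_{1,\beta+\frac{n-1}{2}}(h)$, and differentiate. However, the lemma as stated allows $\beta = -1$, and there your derivation has a gap: Lemma~\ref{PC-caps} is stated only for $\beta > -1$, because the beta density $f_{n,\beta}(x) = c_{n,\beta}(1-\|x\|^2)^\beta$ is not integrable (and $c_{n,-1}$ is not defined) when $\beta = -1$. In that boundary case $\P_\beta$ must be interpreted as the normalized surface measure $\sigma$ on $\S^{n-1}$ (the weak limit $\beta \to -1^+$), not as an integral of a density over the solid cap. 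The edge-case concern you raise --- whether $F_{1,\beta+\frac{n-1}{2}}$ is well-defined --- is not the real issue; indeed $\beta+\frac{n-1}{2} \geq -\frac12 > -1$, as you note. The issue is that Lemma~\ref{PC-caps} itself does not cover $\beta = -1$. The paper handles this by disposing of $\beta=-1$ separately via the classical formula for the normalized surface area of a spherical cap (\cite[Lem.\ 1.5]{SW2003}), and then running exactly your argument for $\beta > -1$. So your proof is correct and matches the paper's for $\beta > -1$, but you should either restrict to $\beta > -1$ and cite the spherical-cap formula for $\beta = -1$, or justify that the identity $v(h) = 1 - F_{1,\beta+\frac{n-1}{2}}(h)$ persists in the limit $\beta \to -1^+$.
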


\begin{proof}
The case $\beta=-1$ can be found, for example, in \cite[Lem.\ 1.5]{SW2003}. Let $\beta>-1$. By Lemma \ref{PC-caps} and the rotational invariance of the beta distribution, 
\[
v=
\mathbb{P}_\beta(B_n\cap H^-)=1-c_{1,\beta+\frac{n-1}{2}}\int_{-1}^h (1-x^2)^{\beta+\frac{n-1}{2}}\, \dint x.
\]
Hence, by the Fundamental Theorem of Calculus,
\[
\frac{dv}{dh}=-c_{1,\beta+\frac{n-1}{2}}(1-h^2)^{\beta+\frac{n-1}{2}}. \qedhere
\]
\end{proof}

We will also need estimates for the radius of a cap in terms of its beta measure. The following lemma may be thought of as a ``beta  interpolation" of the corresponding results relating a cap's radius to its normalized surface area   ($\beta=-1$; \cite[Lem. 3.12]{SW2003}) or normalized volume ($\beta=0$). A similar statement was recently shown in \cite[Lem. 2.4]{HLRT-2022} for any continuous positive density on the sphere.

\begin{lemma}\label{beta-caps-lem}
Let $\beta> -1$. For an affine hyperplane $H=H(h)$  in $\R^n$ with distance $h$ from the origin, let $v=\mathbb{P}_\beta(B_n\cap H^-)=\int_{B_n\cap H^-}f_{n,\beta}(x)\,\dint x$ be the beta measure of the cap of $B_n$ cut off by $H^-$.
Set $r=\sqrt{1-h^2}$. Then for all $r \in (0,\frac{3}{4})$,
\begin{align*}
    r^{n+2\beta+1}\leq d_{n,\beta}\, v \leq r^{n+2\beta+1} (1+r^2),
\end{align*}
where
\begin{equation}\label{eqn:d_n_beta}
    d_{n,\beta} := \frac{n+2\beta+1}{c_{1,\beta+\frac{n-1}{2}}} 
    = \frac{2\pi}{B(\frac{1}{2},\frac{n}{2}+\beta+1)}.
\end{equation}
\end{lemma}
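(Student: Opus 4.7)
The plan is to reduce everything to a one-dimensional integral by exploiting Lemma \ref{caps-lem}, and then bound that integral from above and below by elementary means.

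Starting from Lemma \ref{caps-lem}, which gives $\frac{dv}{dh} = -c_{1,\beta+\frac{n-1}{2}}(1-h^2)^{\beta+\frac{n-1}{2}}$, and noting that $v \to 0$ as $h\to 1$, I would first integrate to obtain
\begin{equation*}
    v = c_{1,\beta+\frac{n-1}{2}} \int_h^1 (1-t^2)^{\beta+\frac{n-1}{2}}\, \dint t.
\end{equation*}
Next I would perform the substitution $s = \sqrt{1-t^2}$, so that $t\,\dint t = -s\,\dint s$ and the new limits are $s=r$ (corresponding to $t=h$) and $s=0$ (corresponding to $t=1$). The integrand $(1-t^2)^{\beta+\frac{n-1}{2}}$ becomes $s^{n-1+2\beta}$, so that
\begin{equation*}
    v = c_{1,\beta+\frac{n-1}{2}} \int_0^r \frac{s^{n+2\beta}}{\sqrt{1-s^2}}\, \dint s.
\end{equation*}

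The lower bound is immediate: since $1/\sqrt{1-s^2} \geq 1$ on $[0,1)$, integrating $s^{n+2\beta}$ directly and multiplying by $d_{n,\beta} = (n+2\beta+1)/c_{1,\beta+\frac{n-1}{2}}$ yields $d_{n,\beta} v \geq r^{n+2\beta+1}$. For the upper bound, I would establish the elementary inequality
\begin{equation*}
    \frac{1}{\sqrt{1-s^2}} \leq 1 + s^2 \quad \text{for all } s \in [0, \tfrac{3}{4}],
\end{equation*}
which is equivalent to $(1+s^2)^2(1-s^2) \geq 1$, i.e.\ $s^2 - s^4 - s^6 \geq 0$, and this follows since $s^2 + s^4 \leq \frac{9}{16}+\frac{81}{256} = \frac{225}{256} < 1$ on that interval. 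Applying this pointwise bound on $[0,r] \subset [0,3/4]$ gives
\begin{equation*}
    v \leq c_{1,\beta+\frac{n-1}{2}}\left[\frac{r^{n+2\beta+1}}{n+2\beta+1} + \frac{r^{n+2\beta+3}}{n+2\beta+3}\right],
\end{equation*}
and since $\frac{n+2\beta+1}{n+2\beta+3} \leq 1$, multiplying through by $d_{n,\beta}$ delivers $d_{n,\beta} v \leq r^{n+2\beta+1}(1+r^2)$.

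Finally, to justify the second expression for $d_{n,\beta}$ in \eqref{eqn:d_n_beta}, I would substitute the formula for $c_{1,\beta+\frac{n-1}{2}}$ from \eqref{cnb-constant} and set $\gamma = \beta + \frac{n+1}{2}$. Then $n + 2\beta + 1 = 2\gamma$, so $(n+2\beta+1)\Gamma(\gamma) = 2\Gamma(\gamma+1)$, while $B\bigl(\frac{1}{2},\frac{n}{2}+\beta+1\bigr) = \sqrt{\pi}\,\Gamma(\gamma+\tfrac{1}{2})/\Gamma(\gamma+1)$; a direct comparison shows both expressions equal $2\sqrt{\pi}\,\Gamma(\gamma+1)/\Gamma(\gamma+\tfrac{1}{2})$. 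The main obstacle, if any, is just verifying the inequality $(1+s^2)\sqrt{1-s^2} \geq 1$ on $[0,\tfrac{3}{4}]$ — which is where the explicit bound $r<\tfrac{3}{4}$ enters — and everything else is a routine substitution and integration.
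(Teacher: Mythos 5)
Your proof is correct and follows essentially the same route as the paper: both reduce to the one-dimensional integral $c_{1,\beta+\frac{n-1}{2}}\int_h^1(1-t^2)^{\beta+\frac{n-1}{2}}\,\dint t$ (the paper via Lemma \ref{PC-caps} directly, you via integrating Lemma \ref{caps-lem}, which is the same thing), substitute to bring out the factor $s^{n+2\beta}/\sqrt{1-s^2}$, and then sandwich the square-root factor between $1$ and $1+r^2$. The only cosmetic difference is that the paper uses the interval-wide bound $1/\sqrt{1-y}\leq 1+r^2$ on $[0,r^2]$ while you use the pointwise bound $1/\sqrt{1-s^2}\leq 1+s^2$ on $[0,r]$ and then discard the harmless factor $\frac{n+2\beta+1}{n+2\beta+3}\leq 1$; both are valid and give the identical conclusion.
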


\begin{proof}
By Lemma \ref{PC-caps}, the rotational invariance of the beta distribution  
and the substitution $y=1-x^2$, we obtain
\begin{align}\nonumber
v=\mathbb{P}_\beta(B_n\cap H^-)
&=c_{1,\beta+\frac{n-1}{2}}\int_{h}^1 (1-x^2)^{\beta+\frac{n-1}{2}}\,\dint x\\\nonumber
&=\tfrac{1}{2} c_{1,\beta+\frac{n-1}{2}} \int_0^{1-h^2}y^{\beta+\frac{n-1}{2}}(1-y)^{-\frac{1}{2}}\,\dint y\\\label{eqn:proof_eq1}
&=\tfrac{1}{2} c_{1,\beta+\frac{n-1}{2}} \, B\!\left(1-h^2; \tfrac{n+2\beta+1}{2}, \tfrac{1}{2}\right)
\end{align}
where $B(z;a,b)=\int_0^z y^{a-1}(1-y)^{b-1}\,\dint y$ is the incomplete beta function. 
Using $1-h^2=r^2$, for $0\leq r\leq \frac{3}{4}$ we obtain the estimate
\begin{equation*}
    1\leq \frac{1}{\sqrt{1-y}} \leq 1+r^2 \qquad \text{for all $0\leq y \leq r^2$.}
\end{equation*}
This yields
\begin{equation}\label{eqn:proof_eq2}
    \frac{2}{n+2\beta+1} r^{n+2\beta+1} 
    \leq B\!\left(r^2;\tfrac{n+2\beta+1}{2}, \tfrac{1}{2}\right)
    \leq \frac{2}{n+2\beta+1} r^{n+2\beta+1} (1+r^2).
\end{equation}
The statement of the lemma follows by combining \eqref{eqn:proof_eq1} and \eqref{eqn:proof_eq2}.
\end{proof}


\subsection{Asymptotics of the expected volume of a random beta polytope}

The next ingredient is a special case of a result of Affentranger \cite[Cor.\ 1]{Affentranger1991}, which describes the asymptotic behavior of the expected volume of a random beta polytope. In what follows, let $B(v,w)=\int_0^1 t^{v-1}(1-t)^{w-1}\,\dint t$ denote the beta function.

\begin{lemma}\label{beta-volume}
Let $n\geq 1$ and let $X_1,\ldots,X_N$ be i.i.d. random points chosen from $B_n$ according to the beta distribution with $\beta> -1$, and set $P_{n,N}^\beta:=[X_1,\ldots,X_N]$. Then the expected volume of $P_{n,N}^\beta$ satisfies
\begin{align*}\label{beta-volume}
    \lim_{N\to\infty}N^{\frac{2}{n+2\beta+1}}\E[\vol_n(B_n\setminus P_{n,N}^\beta)] = A_{n,\beta},
\end{align*}
where
\begin{equation}\label{eqn:A_n_beta}
A_{n,\beta}:=\frac{\omega_n}{2}\, \frac{n+2\beta+1}{n+2\beta+3} \, \frac{\Gamma\left(n+1+\frac{2}{n+2\beta+1}\right)}{\Gamma(n+1)}\, 
    d_{n,\beta}^{\frac{2}{n+2\beta+1}}.
\end{equation}
\end{lemma}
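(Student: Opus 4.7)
The plan is to split the argument by dimension, since Affentranger's Corollary 1 in \cite{Affentranger1991} is stated only for $n\geq 2$. For $n\geq 2$, the lemma is literally Affentranger's result, and the only work is to verify that the constant he writes down (in a slightly different normalization) equals $A_{n,\beta}$ as defined in \eqref{eqn:A_n_beta}; this is a mechanical identification using the Legendre duplication formula together with the definition \eqref{eqn:d_n_beta} of $d_{n,\beta}$. The interesting case is therefore $n=1$, which I would handle directly.

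For $n=1$, $B_1=[-1,1]$ and $P_{1,N}^\beta=[X_{(1)},X_{(N)}]$ is the interval between the extreme order statistics of $N$ i.i.d.\ samples with density $f_{1,\beta}$. By symmetry of $f_{1,\beta}$ and the standard tail identity for the maximum,
\[
    \E[\vol_1(B_1\setminus P_{1,N}^\beta)] \;=\; 2\,\E[1-X_{(N)}] \;=\; 2\int_0^2 F_{1,\beta}(1-t)^N\,\dint t,
\]
where $F_{1,\beta}$ is the one-dimensional beta cumulative distribution function. A Taylor expansion at $x=1$, using $(1-x^2)^\beta=(1-x)^\beta(1+x)^\beta\sim 2^\beta(1-x)^\beta$, gives $1-F_{1,\beta}(1-t)\sim K\,t^{\beta+1}$ as $t\to 0^+$ with $K:=c_{1,\beta}\,2^\beta/(\beta+1)$. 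After the rescaling $t=N^{-1/(\beta+1)}\tau$ (which is natural since $n+2\beta+1=2(\beta+1)$ for $n=1$), one obtains the pointwise limit $F_{1,\beta}(1-N^{-1/(\beta+1)}\tau)^N\to e^{-K\tau^{\beta+1}}$, and an exponential domination argument shows
\[
    \lim_{N\to\infty} N^{2/(n+2\beta+1)}\,\E[\vol_1(B_1\setminus P_{1,N}^\beta)] \;=\; 2\int_0^\infty e^{-K\tau^{\beta+1}}\,\dint\tau \;=\; 2\,K^{-1/(\beta+1)}\,\Gamma\!\bigl(\tfrac{\beta+2}{\beta+1}\bigr).
\]

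It remains to check that this matches $A_{1,\beta}$. Substituting $n=1$ into \eqref{eqn:A_n_beta}, using $\omega_1=2$, the recursion $\Gamma(2+\tfrac{1}{\beta+1})=\tfrac{\beta+2}{\beta+1}\,\Gamma(\tfrac{\beta+2}{\beta+1})$, and the identity $d_{1,\beta}\,K=2^{\beta+1}$ (a one-line consequence of $c_{1,\beta}=\Gamma(\beta+3/2)/(\sqrt{\pi}\,\Gamma(\beta+1))$), one sees $A_{1,\beta}=2K^{-1/(\beta+1)}\,\Gamma(\tfrac{\beta+2}{\beta+1})$, completing the proof.

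The main technical obstacle I anticipate is rigorously justifying dominated convergence in the $n=1$ case. For this one needs a \emph{global} lower bound $1-F_{1,\beta}(1-t)\geq c\,t^{\beta+1}$ on all of $(0,2]$ (not merely near $0$), which combined with the elementary inequality $(1-y)^N\leq e^{-Ny}$ yields $F_{1,\beta}(1-N^{-1/(\beta+1)}\tau)^N\leq e^{-c\tau^{\beta+1}}$, an integrable dominator on $[0,\infty)$. The gamma-function manipulations needed to match $A_{n,\beta}$ to Affentranger's constant in the higher-dimensional case are mechanical but not entirely trivial.
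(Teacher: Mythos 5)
Your proposal is correct and follows essentially the same route as the paper: for $n\geq 2$ it invokes Affentranger's Corollary 1 verbatim, and for $n=1$ it reduces to the expected range of the extreme order statistics and extracts the $N^{-1/(\beta+1)}$ asymptotics (cf.\ Appendix~B). The only cosmetic difference is that you compute $\E[1-X_{(N)}]$ via the tail integral $\int_0^2 F_{1,\beta}(1-t)^N\,\dint t$ and a rescaling $t=N^{-1/(\beta+1)}\tau$, whereas the paper starts from the density formula $\E\max_i X_i = N\int x\,F_{1,\beta}(x)^{N-1}f_{1,\beta}(x)\,\dint x$ and substitutes $F_{1,\beta}(1-s)=1-t/N$; these are interchangeable and lead to the same $\Gamma(1+\tfrac{1}{\beta+1})$ factor, and your explicit global lower bound $1-F_{1,\beta}(1-t)\geq c\,t^{\beta+1}$ on $(0,2]$ gives a slightly cleaner dominated-convergence justification than the paper's asymptotic shorthand.
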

In  Appendix \ref{sec-appendix}, we show that for every $n\geq 1$ and every $\beta\geq -1/2$,
\begin{equation*}
    A_{n,\beta} = \frac{\omega_n}{2}\left(1+O\left(\frac{\ln(n+2\beta+2)}{n+2\beta+1}\right)\right). 
\end{equation*}
Affentranger \cite[Cor.\ 1]{Affentranger1991} stated his results for $n\geq 2$. In Appendix \ref{sec:appendix-B}, we briefly verify that his arguments are also true in the case $n=1$.

\section{Proof of Theorem \ref{mainThm}}\label{sec-proof-mainThm}

\subsection{Step 1: Reduction to the weighted  symmetric volume difference}

In the first step, we reduce the problem to estimating the expected symmetric volume difference of the projection of a random beta polytope and a Euclidean ball.

\begin{lemma}\label{weak-dist-lem}
Let $U_1,\ldots,U_N$ be chosen independently and uniformly from the sphere $\mathbb{S}^{n-1}$, and set $P_{n,N}^{\rm unif}:=[U_1,\ldots,U_N]$. Then for  any fixed $r>0$ and all $j\in[n]$,
\begin{equation*}
    \E[\delta_j(P_{n,N}^{\rm unif},rB_n)]=\bbinom{n}{j}\E[\vol_j(P_{j,N}^{\beta=\frac{n-j-2}{2}}\triangle rB_j)].
\end{equation*}
\end{lemma}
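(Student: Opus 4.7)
The plan is to expand $\delta_j(P_{n,N}^{\rm unif}, rB_n)$ using Definition \ref{mainDefn}, interchange expectation with the Grassmannian integral via Tonelli's theorem (all integrands are nonnegative), and then identify the projected random polytope $P_{n,N}^{\rm unif}|H$ as a random beta polytope in $H$. Precisely, for any $H \in \Gr(n,j)$,
\begin{equation*}
\vol_j\bigl((P_{n,N}^{\rm unif}|H) \triangle (rB_n|H)\bigr) = \vol_j\bigl([U_1|H,\ldots,U_N|H] \triangle rB_j\bigr),
\end{equation*}
using the elementary fact that $rB_n|H = rB_j$ (identifying $H$ with $\R^j$) and that projection commutes with taking convex hulls of finitely many points.

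The crux is to identify the distribution of the projected vertices $U_i|H$. Since $U_i$ is uniform on $\S^{n-1}$, a direct co-area computation (integrating out the orthogonal coordinates) shows that $U_i|H$ has density proportional to $(1-\|y\|^2)^{(n-j-2)/2}$ on $B_j \subset H$; equivalently, this is the beta density $f_{j,(n-j-2)/2}$. This is exactly the conclusion of Lemma \ref{KTT-proj1} extended formally to the boundary case $\beta = -1$, justified either by the weak convergence of beta distributions to the uniform measure on $\S^{n-1}$ as $\beta \to -1^+$ (mentioned just after \eqref{cnb-constant}) together with the continuity of the map $(x_1,\ldots,x_N) \mapsto \vol_j([x_1,\ldots,x_N] \triangle rB_j)$, or by the direct fiber integration described above. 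The $U_i$ are i.i.d., so the projections $U_i|H$ are i.i.d.\ with the aforementioned beta density, and hence $[U_1|H,\ldots,U_N|H]$ has the same distribution as the random beta polytope $P_{j,N}^{\beta = (n-j-2)/2}$ in $H \cong \R^j$.

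Combining these observations yields
\begin{equation*}
\E\bigl[\vol_j((P_{n,N}^{\rm unif}|H) \triangle rB_j)\bigr] = \E\bigl[\vol_j(P_{j,N}^{\beta=(n-j-2)/2} \triangle rB_j)\bigr]
\end{equation*}
for every $H \in \Gr(n,j)$, with the right-hand side independent of $H$ by rotational invariance. Integrating this constant against the Haar probability measure $\nu_j$ on $\Gr(n,j)$ and multiplying by the flag coefficient gives the stated identity. The one step that needs a little care is justifying the projection density for the boundary case $\beta = -1$; this should be a brief and standard computation rather than a genuine obstacle.
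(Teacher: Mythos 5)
Your proposal is correct and reaches the same final identity by essentially the same chain of ideas (expand $\delta_j$, identify each projection as a beta polytope, invoke rotational invariance), but the key step is handled differently and arguably more directly. The paper never identifies the projection law of a uniform-on-$\S^{n-1}$ point directly: instead it works throughout with $\beta>-1$, applies Lemma \ref{KTT-proj1} to get $P_{n,N}^\beta|H \stackrel{d}{=} P_{j,N}^{\beta+\frac{n-j}{2}}$, and then passes to the limit $\beta\to -1^+$ on \emph{both} sides using the continuous mapping theorem and bounded convergence, needing the claimed identity for all $\beta>-1$ before letting $\beta\to -1^+$. You instead observe directly (via co-area / fiber integration) that the projection of the uniform distribution on $\S^{n-1}$ onto a $j$-dimensional subspace has density proportional to $(1-\|y\|^2)^{(n-j-2)/2}$, i.e.\ is exactly $f_{j,\frac{n-j-2}{2}}$, so that $[U_1|H,\dotsc,U_N|H] \stackrel{d}{=} P_{j,N}^{\frac{n-j-2}{2}}$ outright, with no limiting argument. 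This is a genuinely cleaner route (one step rather than a double limit), and the fiber computation is classical; your exponent $(n-j-2)/2$ is correct, matching the formal $\beta\to -1$ limit of Lemma \ref{KTT-proj1}. The only thing to be careful about is measurability of the Grassmannian integrand for Tonelli, which is routine given its continuity. Your alternative justification (weak convergence of $\mu_{n,\beta}$ to $\sigma$) is essentially the paper's route.
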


\begin{proof}
For $\beta>-1$, let $\mu_{n,\beta}$ denote the beta distribution on $B_n$. As shown in the proof of \cite[Cor.\ 3.9]{KabluchkoEtAl2019}, $\mu_{n,\beta}\to\sigma$ weakly as $\beta\to -1^+$, and thus for each $N\in\mathbb{N}$ the product measure $\otimes_{i=1}^N\mu_{n,\beta}$ converges weakly to $\otimes_{i=1}^N\sigma$ as $\beta\to -1^+$. Note that the functional $(x_1,\ldots,x_N)\mapsto \delta_j([x_1,\ldots,x_N],rB_n)$ is continuous on $\prod_{i=1}^N B_n$. Thus by the continuous mapping theorem, the random variable $\delta_j(P_{n,N}^\beta,rB_n)$ converges in distribution to $\delta_j(P_{n,N}^{\rm unif},rB_n)$ as $\beta\to -1^+$. Since these random variables are bounded from above by $(1+r^j) V_j(B_n)$, we obtain 
\[
\lim_{\beta\to -1^+}\E[\delta_j(P_{n,N}^\beta,rB_n)]=\E[\delta_j(P_{n,N}^{\rm unif},rB_n)].
\]
A similar argument  shows that for any $r>0$,
\[
\lim_{\beta\to -1^+}\E[\vol_j( P_{j,N}^{\beta+\frac{n-j}{2}} \triangle rB_j)]=                 \E[\vol_j(P_{j,N}^{\frac{n-j-2}{2}}\triangle rB_j)].
\]

Next, for any subspace $H\in\Gr(n,j)$, Lemma \ref{KTT-proj1} yields 
\[
P_{n,N}^\beta|H \stackrel{d}{=} P_{j,N}^{\beta+\frac{n-j}{2}}
\]
where $\stackrel{d}{=}$ indicates equality in distribution. Thus by Fubini's theorem  and the rotational invariance of the beta distribution, for any $r\in(0,1)$ we have 
\begin{align}\label{reduction-step}
    \E[\delta_j(P_{n,N}^{\rm unif},rB_n)] 
        &= \lim_{\beta\to -1^+} \E[\delta_j(P_{n,N}^\beta,rB_n)]\nonumber\\
        &= \lim_{\beta\to -1^+} \bbinom{n}{j} \int_{\Gr(n,j)} \E[\vol_j((P_{n,N}^\beta|H)\triangle(rB_n|H))]\, \dint \nu_j(H) \nonumber\\ 
        &= \lim_{\beta\to -1^+} \bbinom{n}{j} \int_{\Gr(n,j)} \E[\vol_j( P_{j,N}^{\beta+\frac{n-j}{2}} \triangle rB_j)]\,\dint \nu_j(H) \nonumber\\
        &= \lim_{\beta\to -1^+} \bbinom{n}{j}                 \E[\vol_j( P_{j,N}^{\beta+\frac{n-j}{2}} \triangle rB_j)] \nonumber\\
        &=                      \bbinom{n}{j}                 \E[\vol_j(P_{j,N}^{\frac{n-j-2}{2}}\triangle rB_j)].
\end{align}
\end{proof}

\subsection{Step 2: The choice of scaling factor}

For any $r\in(0,1)$,
\begin{equation}\label{eqn:step2_eqn}
    \E[\vol_n(rB_n\triangle P_{n,N}^\beta)]=\vol_n(B_n\setminus rB_n)-\E[\vol_n(B_n\setminus P_{n,N}^\beta)]+2\E[\vol_n(rB_n\cap (P_{n,N}^\beta)^c)].
\end{equation}
Given $N\geq n+1$ and $\beta\geq -1$, there exists $\gamma_{n,N,\beta}\in(0,1)$ such that 
\begin{equation}\label{choice-gamma}
    \vol_n(B_n\setminus(1-\gamma_{n,N,\beta})B_n) = \E[\vol_n(B_n\setminus P_{n,N}^\beta)].
\end{equation}
Setting $t_{n,N,\beta}:=1-\gamma_{n,N,\beta}$ and $\dint\P_\beta(x)=f_{n,\beta}(x)\,\dint x$ and combining \eqref{eqn:step2_eqn} and \eqref{choice-gamma},  we have
\begin{equation}\label{reduction-1}
\E[\vol_n(t_{n,N,\beta}B_n\triangle P_{n,N}^\beta)]=2\int_{B_n}\cdots\int_{B_n}\vol_n(t_{n,N,\beta} B_n \setminus [x_1,\ldots,x_N])\,\dint\P_\beta(x_1)\cdots \dint\P_\beta(x_N).
\end{equation}
By \eqref{choice-gamma}, Lemma \ref{beta-volume} and the homogeneity of volume, 
\begin{equation}\label{gamma-asymptotics}
    \gamma_{n,N,\beta} \sim \frac{\E[\vol_n(B_n\setminus P_{n,N}^\beta)]}{n\vol_n(B_n)}
    \sim  \frac{A_{n,\beta}}{\omega_n}N^{-\frac{2}{n+2\beta+1}}
\end{equation}
as $N\to\infty$. In particular, for all $\varepsilon>0$ there exists $N_0$ such that for all $N\geq N_0$,
\begin{equation}\label{gamma-bound}
(1-\varepsilon) \frac{A_{n,\beta}}{\omega_n} N^{-\frac{2}{n+2\beta+1}} \leq \gamma_{n,N,\beta} \leq (1+\varepsilon) \frac{A_{n,\beta}}{\omega_n} N^{-\frac{2}{n+2\beta+1}}. 
\end{equation}
Hence by Stirling's inequality, there are positive absolute constants $c_1$ and $c_2$ such that
\begin{equation}\label{gamma-est}
    c_1 N^{-\frac{2}{n+2\beta+1}}\leq \gamma_{n,N,\beta}\leq c_2 N^{-\frac{2}{n+2\beta+1}}.
\end{equation}
More specifically, 
\begin{equation}\label{gamma-const-est}
\frac{A_{n,\beta}}{\omega_n} \sim c_1 \sim c_2 = \frac{1}{2}\left(1+O\left(\frac{\ln(n+2\beta+2)}{n+2\beta+1}\right)\right)
\end{equation}
as $n\to \infty$. This computation is carried out in Appendix \ref{sec-appendix}.

\subsection{Step 3: Adapting the random construction from \texorpdfstring{\cite{LSW}}{[22]} to beta polytopes}

The next result extends the proof of \cite[Thm. 1]{LSW} from the uniform distribution on the sphere $\mathbb{S}^{n-1}$ ($\beta=-1$) to all beta distributions on $B_n$ with $\beta\geq -\frac{1}{2}$.

\begin{theorem}\label{beta-LSW}
Fix $n\geq 1$ and $\beta\geq -\frac{1}{2}$, and let $P_{n,N}^\beta$ be the convex hull of $N\geq n+1$ random points $X_1,\ldots,X_N$ chosen i.i.d. from the Euclidean unit ball $B_n$ with respect to the beta distribution. Then for all sufficiently large $N$, 
\begin{align*}
    \E[\vol_n(B_n\triangle t_{n,N,\beta}^{-1}P_{n,N}^\beta)]&\leq
    \left(1 + O\left(\frac{\ln(n+2\beta+2)}{n+2\beta+1}\right)\right) \frac{2n\vol_n(B_n)}{n+2\beta+1}  N^{-\frac{2}{n+2\beta+1}}.
     \end{align*}
\end{theorem}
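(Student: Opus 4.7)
The argument extends that of \cite{LSW} (uniform points on $\S^{n-1}$, corresponding to the limit $\beta=-1$) to general $\beta\geq -\tfrac{1}{2}$, exploiting the rotational symmetry of the beta density together with the projection, cap and moment lemmas stated in Section~\ref{sec-background-lemmas}. The $n$-homogeneity of $\vol_n$ gives
\begin{equation*}
    \E[\vol_n(B_n\triangle t^{-1}P_{n,N}^\beta)] = t^{-n}\,\E[\vol_n(tB_n\triangle P_{n,N}^\beta)] = 2\, t^{-n}\,\E[\vol_n(tB_n\setminus P_{n,N}^\beta)],
\end{equation*}
where $t := t_{n,N,\beta}$ and the second equality is precisely \eqref{reduction-1}. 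By \eqref{gamma-bound}--\eqref{gamma-const-est}, $t^{-n} = 1 + O(\ln(n+2\beta+2)/(n+2\beta+1))$, so the task reduces to showing
\begin{equation*}
    \E[\vol_n(tB_n\setminus P_{n,N}^\beta)] \leq \Big(1 + O\big(\tfrac{\ln(n+2\beta+2)}{n+2\beta+1}\big)\Big)\,\frac{n\vol_n(B_n)}{n+2\beta+1}\,N^{-2/(n+2\beta+1)}.
\end{equation*}

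To bound the missing volume I would use a facet-cap union bound: every $z\in tB_n\setminus P_{n,N}^\beta$ is separated from the polytope by some facet hyperplane $H_F$, hence lies in the outer cap $tB_n\cap H_F^-$. Since i.i.d.\ continuous samples almost surely yield a simplicial polytope, every facet is spanned by exactly $n$ vertices, and enumerating facets by $n$-subsets of indices yields
\begin{equation*}
    \E[\vol_n(tB_n\setminus P_{n,N}^\beta)] \leq 2\binom{N}{n}\int_{B_n^n}\vol_n\!\big(tB_n\cap H^-(x_1,\ldots,x_n)\big)(1-v(H))^{N-n}\,\dint\P_\beta^n(x),
\end{equation*}
where $H$ is the hyperplane through the points, $v(H)$ the beta measure of its far cap, and $(1-v)^{N-n}$ the probability that the remaining $N-n$ points all lie on the inner side (the factor $2$ accounts for the two possible orientations of the facet).

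Next I would apply the affine Blaschke--Petkantschin formula (Lemma~\ref{BPext}) to parametrize the integral by $(u,h)\in\S^{n-1}\times[0,1]$. Since the outer factors depend only on $(u,h)$, the inner $n$-fold integral collapses to a moment of $\vol_{n-1}([x_1,\ldots,x_n])$ against the restricted beta measure $\P_\beta^H$. Cauchy--Schwarz combined with Lemmas~\ref{beta-Moments}--\ref{2nd-moment} then bounds this moment explicitly by $(1-h^2)$ to an explicit exponent times a combinatorial constant. The substitution $h\mapsto v$ via Lemma~\ref{caps-lem}, combined with the cap radius estimate of Lemma~\ref{beta-caps-lem}, reduces the problem to an Euler--beta integral of schematic form $\binom{N}{n}\int_0^1 v^{a}(1-v)^{N-n}\,\dint v$, whose leading order as $N\to\infty$ is $N^{-2/(n+2\beta+1)}$ by Stirling's formula.

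The \emph{main obstacle} is the bookkeeping of constants. Each ingredient (BP formula, second-moment identity, cap volume asymptotics, Euler--beta asymptotics) contributes a gamma-function factor, and one must verify carefully that their combined product collapses to $\frac{n\vol_n(B_n)}{n+2\beta+1}$ up to a multiplicative $1+O(\ln(n+2\beta+2)/(n+2\beta+1))$ error; this mirrors exactly the bookkeeping carried out in Appendix~\ref{sec-appendix} for the constant $A_{n,\beta}/\omega_n$ of \eqref{gamma-const-est}, and the same Stirling-based simplification produces the sharp leading coefficient here. A secondary concern is that the facet-cap union bound overcounts the region where several facets are visible from the same exterior point; one must check that this overcounting is subleading, which follows from the Laplace-type concentration of the integrand near $v=0$ where the dominant contribution lives.
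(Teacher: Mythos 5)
Your plan diverges from the paper's argument at the crucial step, and the divergence is not cosmetic: the facet\emph{-cap} union bound you propose cannot yield the claimed constant $\tfrac{2n\vol_n(B_n)}{n+2\beta+1}\big(1+O(\tfrac{\ln(n+2\beta+2)}{n+2\beta+1})\big)$.

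The paper (following \cite{LSW}) does not use a union bound over caps. Conditioned on the event $\mathcal E_{n,N,\beta}$ that $o\in\interior P_{n,N}^\beta$, it uses the \emph{exact} conical partition $\R^n=\bigcup_{F}\mathrm{cone}(F)$ over facets $F$, which turns the missing volume into the disjoint sum $\vol_n(t_{n,N,\beta}B_n\setminus P)=\sum_F\vol_n\!\big(t_{n,N,\beta}B_n\cap H_F^-\cap\mathrm{cone}(F)\big)$, and then bounds each summand by a truncated cone, yielding $\leq\tfrac{h}{n}\max\{0,(t/h)^n-1\}\vol_{n-1}(F)\leq(t-h)\vol_{n-1}(F)$. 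This is the decisive move: it introduces a \emph{second} factor of $\vol_{n-1}(F)$, which pairs with the Blaschke--Petkantschin Jacobian $\vol_{n-1}(F)$ so that the inner integral is exactly the second-moment integral of Lemma \ref{2nd-moment}, and the constant $d_{n,\beta}^{-(n-1)}$ that emerges (via Lemma \ref{beta-Moments}) is what collapses the whole computation to $\tfrac{2n\vol_n(B_n)}{n+2\beta+1}$ after the Euler-type substitution $t=N\phi_\beta(h)$.

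Your integrand $\vol_n\!\big(tB_n\cap H^-\big)\cdot\vol_{n-1}([x_1,\ldots,x_n])$ contains only one factor of the simplex volume. The full cap $tB_n\cap H^-$ has base a $(n-1)$-ball of radius $\sqrt{t^2-h^2}$, whereas the paper's cone piece sits over the simplex $[x_1,\ldots,x_n]$ of the same order of radius; the ratio of the two contributions, after integrating in $x$ against $\P_\beta^H$, is of the order $\tfrac{\vol_{n-1}(B_{n-1})\,\E[\mathcal V_{n,n-1}]}{(n+1)\,\E[\mathcal V_{n,n-1}^2]}$, and this quantity grows without bound in $n$ (it is of order $(2\pi(n+2\beta+1))^{(n-1)/2}$ up to polynomial factors). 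Consequently the loss is not of the form $1+O(\tfrac{\ln(n+2\beta+2)}{n+2\beta+1})$, and no amount of bookkeeping can repair it. Applying Cauchy--Schwarz to convert the first moment $\E[\mathcal V]$ into $\sqrt{\E[\mathcal V^2]}$, as you suggest, only compounds the loss.

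The ``secondary concern'' you raise is in fact the primary obstacle, and the remedy you offer does not work. The overcounting in the union bound is \emph{multiplicative}: a point $z\in tB_n\setminus P$ is on the wrong side of many facet hyperplanes, and each cap $tB_n\cap H_F^-$ contains large regions that are actually covered by other facets' cone pieces. This overcounting factor does not disappear as $v=\phi_\beta(h)\to0$ (equivalently $r\to0$) -- it equals, at leading order, the ratio of the cap base to the facet simplex and stays bounded away from $1$ throughout the Laplace regime. There is no ``subleading'' estimate to be had. Finally, a minor point: the extra factor $2$ you attach to the union bound for ``two orientations'' is unjustified -- once $H^-$ is the halfspace not containing the origin, there is a single admissible orientation, and the factor $2$ in the paper arises solely from the identity $\E[\vol_n(tB_n\triangle P)]=2\,\E[\vol_n(tB_n\setminus P)]$ (equation \eqref{reduction-1}) which you have already consumed.
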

\noindent We prove Theorem \ref{beta-LSW} below and apply it in Step 4 where we will replace $(n,\beta)$ by $(j,\frac{n-j-2}{2})$ for $n\geq 2$ and $j\in [n-1]$. Therefore, in our estimates for the constants in Theorem \ref{beta-LSW} we need good bounds for all $n\geq 1$ as $\beta\to \infty$, so that in Step 4 after the replacement we obtain bounds for all $j\in[n]$ as $n\to\infty$.

\begin{proof}[Proof of Theorem \ref{beta-LSW}.]

Choose i.i.d.\ random points $X_1,X_2,\ldots$ from $B_n$ according to the beta density $f_{n,\beta}$, and for $N\geq n+1$ define the random beta polytope $P_{n,N}^\beta:=[X_1,\ldots,X_N]$ to be the convex hull of $X_1,\ldots,X_N$.  
 
Let $\mathcal{E}_{n,N,\beta}$ denote the event that the origin $o$ lies in the interior of $P_{n,N}^\beta$. Then as in \cite[Lem. 4.3 (ii)]{SW2003}, see also \cite[proof of Cor.\ 1]{Affentranger1991}, 
\begin{align*}
\P(\mathcal{E}_{n,N,\beta}^c) &= \P(\{o\not\in \interior [X_1,\ldots,X_N]\})\\
&=\P_\beta^N(\{(x_1,\ldots,x_N)\in B_n^N:\,o\not\in \interior [x_1,\ldots,x_N]\}) \leq e^{-cN}
\end{align*}
for some positive absolute constant $c=c(n,\beta)$, where $B_n^m = \prod_{i=1}^m B_n$ for $m\in\mathbb{N}$.  Hence, by the law of total expectation,
\begin{align*}
&\E[\vol_n(t_{n,N,\beta}B_n \triangle P_{n,N}^\beta)]\\
&=\E[\vol_n(t_{n,N,\beta}B_n \triangle P_{n,N}^\beta)|\mathcal{E}_{n,N,\beta}]\P(\mathcal{E}_{n,N,\beta})+\E[\vol_n(t_{n,N,\beta}B_n \triangle P_{n,N}^\beta)|\mathcal{E}_{n,N,\beta}^c]\P(\mathcal{E}_{n,N,\beta}^c)\\
&\leq \E[\vol_n(t_{n,N,\beta}B_n\triangle P_{n,N}^\beta)|\mathcal{E}_{n,N,\beta}]\P(\mathcal{E}_{n,N,\beta}) + \vol_n(B_n) e^{-cN}.
    \end{align*}
The second term is exponentially decreasing in $N$, while, as we will see, the first is of the order $N^{-\frac{2}{n+2\beta+1}}$. Therefore, the second term is negligible.

By \eqref{reduction-1} and the fact that  $P_{n,N}^\beta$ is simplicial with probability one, we have that
\begin{align*}
    &\E[\vol_n(t_{n,N,\beta}B_n\triangle P_{n,N}^\beta)|\mathcal{E}_{n,N,\beta}] \P(\mathcal{E}_{n,N,\beta})\\
    &\quad =2\int_{B_n}\cdots\int_{B_n} \vol_n(t_{n,N,\beta}B_n\setminus [x_1,\ldots,x_N])\mathbbm{1}_{E_{n,N,\beta}}(x_1,\ldots,x_N)\,\dint \P_\beta(x_1)\cdots \dint \P_\beta(x_N),
\end{align*}
where
\begin{equation*}
    E_{n,N,\beta} := \{(x_1,\dotsc,x_N) \in B_n^N : o \in \interior [x_1,\dotsc,x_N] \text{ and $[x_1,\dotsc,x_n]$ is simplicial}\}.
\end{equation*}
For points $(x_1,\dotsc,x_N)\in E_{n,N,\beta}$, we can decompose $\R^n$ into the following union of cones with pairwise disjoint interiors: 
\begin{equation*}
\R^n = \bigcup_{[x_{j_1},\ldots,x_{j_n}] \in \mathcal{F}_{n-1}([x_1,\ldots,x_N])} {\rm cone}(x_{j_1},\ldots,x_{j_n}),
\end{equation*}
where, for a polytope $P$ in $\R^n$, we let $\mathcal{F}_{n-1}(P)$ denote the set of facets ($(n-1)$-dimensional faces) of $P$, and ${\rm cone}(y_1,\ldots,y_m):=\{\sum_{i=1}^m a_i y_i:\, a_i\geq 0,\, i\in[m]\}$ denotes the cone spanned by a set of vectors $y_1,\ldots,y_m\in\R^n$.

For points $y_1,\ldots,y_n\in\R^n$ such that their affine hull is an $(n-1)$-dimensional hyperplane $H(y_1,\ldots,y_n)$, let $H^+(y_1,\ldots,y_n)$ denote the halfspace bounded by $H(y_1,\ldots,y_n)$ with $o\in H^+(y_1,\ldots,y_n)$.
For  points $x_1,\ldots,x_N\in\R^n$ and a subset of indices $\{j_1,\ldots,j_n\}\subset[N]$, we define a  functional $\Phi^\beta_{j_1,\ldots,j_n}:(\R^n)^N\to[0,\infty)$ by 
\begin{equation*}
    \Phi^\beta_{j_1,\ldots,j_n}(x_1,\ldots,x_N) :=
        \vol_n(t_{n,N,\beta} B_n\cap H^-(x_{j_1},\dotsc,x_{j_n}) \cap \mathrm{cone}(x_{j_1},\dotsc,x_{j_n})),
\end{equation*}
if $[x_1,\dotsc,x_N]$ contains the origin in its interior and $\dim([x_{j_1},\dotsc,x_{j_n}])=n-1$, and set $\Phi^\beta_{j_1,\ldots,j_n}(x_1,\ldots,x_N) :=0$ otherwise.
Then
\begin{align*}
    &\E[\vol_n(t_{n,N,\beta}B_n\triangle P_{n,N}^\beta)|\mathcal{E}_{n,N,\beta}] \P(\mathcal{E}_{n,N,\beta})\\
    &\quad = 2\int_{B_n}\cdots\int_{B_n}\vol_n(t_{n,N,\beta} B_n\setminus [x_1,\ldots,x_N])\mathbbm{1}_{E_{n,N,\beta}}(x_1,\ldots,x_N)\,\dint\P_\beta(x_1)\cdots \dint\P_\beta(x_N)\\
    &\quad=2\int_{B_n}\cdots\int_{B_n} \sum_{\{j_1,\ldots,j_n\}\subset[N]}\Phi^\beta_{j_1,\ldots,j_n}(x_1,\ldots,x_N)\,\dint\P_\beta(x_1)\cdots \dint\P_\beta(x_N)\\
    &\quad=2\binom{N}{n}\int_{B_n}\cdots\int_{B_n} \Phi^\beta_{1,\ldots,n}(x_1,\ldots,x_N)\,\dint\P_\beta(x_1)\cdots \dint\P_\beta(x_N).
\end{align*}
We apply the affine Blaschke--Petkantschin formula in Lemma \ref{BPext} to derive
\begin{align*}
    &\E[\vol_n(t_{n,N,\beta}B_n\triangle P_{n,N}^\beta)|\mathcal{E}_{n,N,\beta}] \P(\mathcal{E}_{n,N,\beta})\\
    &\quad = 2\omega_n(n-1)! \binom{N}{n} \int_{\S^{n-1}} \int_{0}^1 \int_{B_n\cap H} \cdots \int_{B_n\cap H} \\
    &\qquad \qquad 
        \left[\int_{B_n}\cdots \int_{B_n} \Phi^\beta_{1,\ldots,n}(x_1,\ldots,x_N) \, \dint\P_\beta(x_{n+1}) \cdots \dint\P_\beta(x_N)\right] \times\\
    &\qquad \qquad \qquad \qquad 
        \times\vol_{n-1}([x_1,\dotsc,x_n]) \, \dint \P_\beta^H(x_1)\cdots \dint\P_\beta^H(x_n) \, \dint h \, \dint \sigma(u).  
\end{align*}
Now, by definition $\Phi_{1,\dotsc,n}(x_1,\dotsc,x_N)\neq 0$ if $x_1,\dotsc,x_n$ spans a hyperplane $H$ and $[x_1\dotsc,x_n]\in \mathcal{F}_{n-1}([x_1,\dotsc,x_N])$ and $o\in \interior [x_1,\dotsc,x_N]$. In this case, the value of $\Phi_{1,\dotsc,n}(x_1,\dotsc,x_N)$ only depends on $x_1,\dotsc,x_n$. Since 
\begin{align*}
&\big\{(x_{n+1},\ldots,x_N)\in B_n^{N-n}:[x_1,\ldots,x_n]\in\mathcal{F}_{n-1}([x_1,\dotsc,x_N])\text{ and }o\in \interior [x_1,\dotsc,x_N]\big\}\\
&\; \subset \left\{(x_{n+1},\ldots,x_N)\in B_n^{N-n} : x_{n+1},\ldots,x_N\in H^+(x_1,\ldots,x_n)\right\} = (B_n\cap H^+(x_1,\dotsc,x_n))^{N-n},
\end{align*}
we find that
\begin{align}
      \P_\beta^{N-n}\big(\{(x_{n+1},\ldots,x_N)\in B_n^{N-n}&:[x_1,\ldots,x_n]\in\mathcal{F}_{n-1}([x_1,\dotsc,x_N])\text{ and }o\in \interior [x_1,\dotsc,x_N]\}\big) \nonumber\\
      &\leq \mathbb{P}_\beta(E_h^p)^{N-n}=F_{1,\beta+\frac{n-1}{2}}(h)^{N-n}, \label{prob-correct}
\end{align}
where $h=\dist(o,H(x_1,\dotsc,x_n))$.
Here we also used Lemma \ref{PC-caps} and the rotational invariance of the beta distribution.
Thus
\begin{align*}
    &\int_{B_n}\cdots \int_{B_n} \Phi^\beta_{1,\ldots,n}(x_1,\ldots,x_N) \, \dint\P_\beta(x_{n+1}) \cdots \dint\P_\beta(x_N)\\
    &\quad \leq \vol_n(t_{n,N,\beta}B_n \cap H^-(x_1,\dotsc,x_n) \cap \mathrm{cone}(x_1,\dotsc,x_n)) F_{1,\beta+\frac{n-1}{2}}(h)^{N-n}.
\end{align*}
This yields
\begin{align*}
    &\E[\vol_n(t_{n,N,\beta}B_n\triangle P_{n,N}^\beta)|\mathcal{E}_{n,N,\beta}] \P(\mathcal{E}_{n,N,\beta})\\
    &\quad \leq 2\omega_n(n-1)!  \binom{N}{n}\int_{\S^{n-1}}\int_0^1\int_{B_n\cap H}\cdots\int_{B_n\cap H}\vol_n(t_{n,N,\beta}B_n\cap H^-\cap \mathrm{cone}(x_1,\ldots,x_n))\\    
    &\qquad \qquad \times\vol_{n-1}([x_1,\ldots,x_n]) F_{1,\beta+\frac{n-1}{2}}(h)^{N-n}\, \dint\P_\beta^H(x_1)\cdots \dint \P_\beta^H(x_n)\,\dint h\,\dint \sigma(u),
\end{align*}
where the hyperplane $H=H(u,h)$ has distance $h$ from the origin and is orthogonal to $u\in\S^{n-1}$.

Following as in \cite[page 8]{LSW}, for all sufficiently large $N$ we estimate, for $0 \leq h \leq \frac{n+2\beta+2}{n+2\beta+3}$, 
\begin{equation*}
F_{1,\beta+\frac{n-1}{2}}(h)^{N-n}=\left(1-\mathbb{P}_\beta(B_n\cap H^-)\right)^{N-n}\leq e^{-\mathbb{P}_\beta(B_n\cap H^-)(N-n)}.
\end{equation*}
Note that for $0\leq h\leq \frac{n+2\beta+2}{n+2\beta+3}$ we have
\begin{equation*}
    \mathbb{P}_\beta(B_n\cap H^-(u,h)) \geq \mathbb{P}_\beta\left(B_n\cap H^-\left(u,\frac{n+2\beta+2}{n+2\beta+3}\right)\right) > 0.
\end{equation*}
Consequently, for all sufficiently large $N$, 
\begin{align*}
    &2\omega_n(n-1)!  \binom{N}{n}\int_{\S^{n-1}}\int_0^{\frac{n+2\beta+2}{n+2\beta+3}}\int_{B_n\cap H}\cdots\int_{B_n\cap H}\\
    &\qquad \times \vol_n(t_{n,N,\beta}B_n\cap H^-\cap \mathrm{cone}(x_1,\ldots,x_n))
    \vol_{n-1}([x_1,\ldots,x_n]) F_{1,\beta+\frac{n-1}{2}}(h)^{N-n} \\
    &\qquad\qquad\qquad \times\dint\P_\beta^H(x_1)\cdots \dint \P_\beta^H(x_n)\,\dint h\,\dint \sigma(u)\\
    &\quad\leq 2 \vol_n(B_n)^3 n! \binom{N}{n} e^{- \mathbb{P}_\beta\left(B_n\cap H^-\left(u_0,\frac{n+2\beta+2}{n+2\beta+3}\right)\right) (N-n)}  \leq e^{-C(n,\beta) N}
\end{align*}
for some positive constant $C(n,\beta)$.

Next, observe that
\begin{equation*}
    t_{n,N,\beta}B_n\cap H^-(u,h)\cap{\rm cone}(x_1,\ldots,x_n) \subset H^+(u,t_{n,N,\beta})\cap H^-(u,h)\cap{\rm cone}(x_1,\ldots,x_n),
\end{equation*}
where the latter object is a truncated cone containing the desired cap. Since the top and bottom facets of the truncated cone are homothets, by the monotonicity and homogeneity of the volume functional,
\begin{equation*}
    \vol_n(t_{n,N,\beta}B_n\cap H^-\cap{\rm cone}(x_1,\ldots,x_n)) \leq \frac{h}{n}\max\left\{0,\left(\frac{t_{n,N,\beta}}{h}\right)^n-1\right\}\vol_{n-1}([x_1,\ldots,x_n]).
\end{equation*}
Therefore, using also the fact that the beta distribution is rotationally invariant, we obtain
\begin{align*}
    &\E[\vol_n(t_{n,N,\beta}B_n\triangle P_{n,N}^\beta)] \\
    &\quad \leq   2\omega_n(n-1)! \binom{N}{n}\int_{\mathbb{S}^{n-1}}\int_{\frac{n+2\beta+2}{n+2\beta+3}}^1\int_{B_n\cap H}\cdots\int_{B_n\cap H} \frac{h}{n}\max\left\{0,\left(\frac{t_{n,N,\beta}}{h}\right)^{\!n}-1\right\}\\
    &\qquad \times F_{1,\beta+\frac{n-1}{2}}(h)^{N-n}\, \vol_{n-1}([x_1,\ldots,x_n])^2\, \dint\P_\beta^H(x_1)\cdots \dint\P_{\beta}^H(x_n)\,\dint h\,\dint\sigma(u) + e^{-C(n,\beta)N}\\
    &\quad =   2\omega_n(n-1)!\binom{N}{n}\int_{\frac{n+2\beta+2}{n+2\beta+3}}^1  \frac{h}{n}\max\left\{0,\left(\frac{t_{n,N,\beta}}{h}\right)^n-1\right\}F_{1,\beta+\frac{n-1}{2}}(h)^{N-n}\\
    &\qquad \times\int_{B_n\cap H}\cdots\int_{B_n\cap H}\vol_{n-1}([x_1,\ldots,x_n])^2\, \dint\P_{\beta}^H(x_1) \cdots \dint\P_{\beta}^H(x_n)\,\dint h + e^{-C(n,\beta)N},
\end{align*}
where in the last line we fixed an arbitrary direction $u_0\in\S^{n-1}$ and $H=H(u_0,h)$ is at distance $h$ from the origin. By Lemma \ref{2nd-moment} and the equation $r^2+h^2=1$, we have 
\begin{align*}
     &\E[\vol_n(t_{n,N,\beta}B_n\triangle P_{n,N}^\beta)] \\
     &\quad \leq 2\omega_n(n-1)!\frac{c_{n,\beta}^n}{c_{n-1,\beta}^n}\E[\mathcal{V}_{n,n-1}^2]\times\\
     &\qquad \times\binom{N}{n}\int_{\frac{n+2\beta+2}{n+2\beta+3}}^1 r^{2n\beta+(n-1)(n+2)}\frac{h}{n}\max\left\{0,\left(\frac{t_{n,N,\beta}}{h}\right)^n-1\right\}F_{1,\beta+\frac{n-1}{2}}(h)^{N-n} \,\dint h + e^{-C(n,\beta)N}.
\end{align*}
Since $h\geq \frac{n+2\beta+2}{n+2\beta+3}$ and $1-t_{n,N,\beta}=\gamma_{n,N,\beta}$ is of the order $N^{-\frac{2}{n+2\beta+1}}$,  for all sufficiently large $N$ it holds true that
\begin{align*}
    \frac{1}{n}\left[\left(\frac{t_{n,N,\beta}}{h}\right)^n-1\right] 
    &= \frac{t_{n,N,\beta} - h}{nh} \left[ 1+ \frac{t_{n,N,\beta}}{h} + \dotsc +\left(\frac{t_{n,N,\beta}}{h}\right)^{n-1}\right]\\
    &\leq \frac{t_{n,N,\beta} - h}{nh} \left[ 1+ \frac{1}{h} + \dotsc +\frac{1}{h^{n-1}}\right]\\
    &\leq \frac{t_{n,N,\beta} - h}{n} \frac{n+2\beta+3}{n+2\beta+2} \left[ 1+ \frac{n+2\beta+3}{n+2\beta+2} + \dotsc + \left(\frac{n+2\beta+3}{n+2\beta+2}\right)^{n-1}\right]\\
    &= \left[\left(1+\frac{1}{n+2\beta+2}\right)^n-1\right] \frac{n+2\beta+3}{2n} (t_{n,N,\beta} - h).
\end{align*}
Since
\begin{equation*}
    \left(1+\frac{1}{n+2\beta+2}\right)^n \leq 1+\frac{2n}{n+2\beta+3} \qquad \text{for all $n\geq 1$ and $\beta\geq -1$,}
\end{equation*}
this yields
\begin{equation*}
    \frac{1}{n}\left[\left(\frac{t_{n,N,\beta}}{h}\right)^n-1\right]\leq t_{n,N,\beta}-h.
\end{equation*}

Hence, for all sufficiently large $N$,
\begin{align}\label{shorter-proof}
     \E[\vol_n(t_{n,N,\beta}B_n\triangle P_{n,N}^\beta)]
    &\leq 2\omega_n(n-1)!\frac{c_{n,\beta}^n}{c_{n-1,\beta}^n}\E[\mathcal{V}_{n,n-1}^2] \binom{N}{n}\nonumber\\
        &\times \int_{\frac{n+2\beta+2}{n+2\beta+3}}^{t_{n,N,\beta}} 
        r^{2n\beta+(n-1)(n+2)}(t_{n,N,\beta}-h)(1-\phi_{\beta}(h))^{N-n} \,\dint h + e^{-C(n,\beta)N},
\end{align}
where we set
\begin{equation*}
    \phi_\beta(h):=1-F_{1,\beta+\frac{n-1}{2}}(h)=\mathbb{P}_\beta(B_n\cap H^-(u_0,h)) \qquad \text{for $h\in[0,1]$.}
\end{equation*}
By Lemma \ref{beta-caps-lem} we have that $r^{n+2\beta+1} \leq d_{n,\beta} \phi_{\beta}(h)$. This, together with the inequality
\begin{equation*}
    t_{n,N,\beta}-h\leq 1-h = 1-\sqrt{1-r^2} \leq r^2,
\end{equation*}
yields
\begin{equation*}
    \E[\vol_n(t_{n,N,\beta}B_n\triangle P_{n,N}^\beta)]
     \leq \frac{2n\omega_n}{d_{n,\beta}^{n-1}} \binom{N}{n} \int_{\frac{n+2\beta+2}{n+2\beta+3}}^{1} (d_{n,\beta} \phi_\beta(h))^{n} (1-\phi_\beta(h))^{N-n} \,\dint h + e^{-C(n,\beta)N}.
\end{equation*}
Here we used the fact that
\begin{equation}
    (n-1)!\frac{c_{n,\beta}^n}{c_{n-1,\beta}^n}\E[\mathcal{V}_{n,n-1}^2]
    = d_{n,\beta}^{-(n-1)},
\end{equation}
which follows from Lemma \ref{beta-Moments}. 

We now use the change of variable $t = N\phi_{\beta}(h)$ and note that by Lemmas \ref{beta-caps-lem} and \ref{caps-lem},
\begin{align*}
    \phi_{\beta}'(h) &= - \frac{n+2\beta+1}{d_{n,\beta}} r^{n+2\beta-1} 
    \leq -\frac{n+2\beta+1}{d_{n,\beta}} \left(\frac{d_{n,\beta} \phi_{\beta}(h)}{2-h^2}\right)^{1-\frac{2}{n+2\beta+1}}\\
    &\leq -\frac{n+2\beta+1}{d_{n,\beta}} (d_{n,\beta} \phi_{\beta}(h))^{1-\frac{2}{n+2\beta+1}} \left(\frac{n+2\beta+3}{n+2\beta+5-\frac{1}{n+2\beta+3}}\right)^{1-\frac{2}{n+2\beta+1}}\\
    &\leq -\frac{n+2\beta+1}{d_{n,\beta}} \, \frac{n+2\beta+3}{n+2\beta+4}\,  (d_{n,\beta} \phi_{\beta}(h))^{1-\frac{2}{n+2\beta+1}},
\end{align*}
where we used the fact that $h\geq \frac{n+2\beta+2}{n+2\beta+3}$.
This yields
\begin{align*}
    &\E[\vol_n(t_{n,N,\beta}B_n\triangle P_{n,N}^\beta)]\\
    &\qquad \leq \left(1+\frac{1}{n+2\beta+3}\right) \, \frac{2n\omega_n}{n+2\beta+1} \, \frac{1}{N^n}\binom{N}{n} \left(\frac{d_{n,\beta}}{N}\right)^{\frac{2}{n+2\beta+1}} \\
    &\qquad \qquad \times \int^{N\phi_{\beta}(\frac{n+2\beta+2}{n+2\beta+3})}_{0}
        t^{n-1+\frac{2}{n+2\beta+1}} \left(1-\frac{t}{N}\right)^{N-n} \,\dint t + e^{-C(n,\beta)N}.
\end{align*}
Note that
\begin{align*}
    \left(1-\frac{t}{N}\right)^{-n} 
    \leq \left(1+2\phi_\beta\left(\frac{n+2\beta+2}{n+2\beta+3}\right)\right)^n
    &\leq \left(1+ C_1(n,\beta) n \left(\frac{2}{n+2\beta+3}\right)^{\frac{n+2\beta+1}{2}}\right)\\
    &\leq 1+e^{- C_2(n,\beta) (n+2\beta+1)}
\end{align*}
for some positive constants $C_1(n,\beta)$ and $C_2(n,\beta)$. Therefore,
\begin{equation*}
    \int^{N\phi_{\beta}(\frac{n+2\beta+2}{n+2\beta+3})}_0
        t^{n-1+\frac{2}{n+2\beta+1}} \left(1-\frac{t}{N}\right)^{N-n} \,\dint t
    \leq (1+e^{-O(n+2\beta+1)}) \Gamma\left(n+\frac{2}{n+2\beta+1}\right).
\end{equation*}
From this we conclude that
\begin{align*}
    &\E[\vol_n(t_{n,N,\beta}B_n\triangle P_{n,N}^\beta)]\\
    &\quad \leq  \left(1+O\left((n+2\beta+3)^{-1}\right)\right) \frac{2\omega_n}{n+2\beta+1} \frac{n!}{N^n}\binom{N}{n}
        \frac{\Gamma\left(n+\frac{2}{n+2\beta+1}\right)}{\Gamma(n)} \left(\frac{d_{n,\beta}}{N}\right)^{\!\frac{2}{n+2\beta+1}} \!\!+ e^{-C(n,\beta)N}.
\end{align*}
A classical inequality of Wendel \cite{wendel} states that for $x>0$ and $s\in(0,1)$, we have $1\geq \frac{\Gamma(x+s)}{\Gamma(x)x^s}\geq \left(\frac{x}{x+s}\right)^{1-s}$. Applying this with $x=n$ and $s=\frac{2}{n+2\beta+1}$, we obtain
\begin{equation*}
    \frac{\Gamma\left(n+\frac{2}{n+2\beta+1}\right)}{\Gamma(n)} \leq n^{\frac{2}{n+2\beta+1}} \leq 1+O\left(\frac{\ln(n+2\beta+2)}{n+2\beta+1}\right).
\end{equation*}
Thus, using the inequality
\begin{equation*}
    \frac{n!}{N^n} \binom{N}{n} \leq 1,
\end{equation*}
we derive that for sufficiently large $N$,
\begin{equation*}
    \E[\vol_n(t_{n,N,\beta}B_n\triangle P_{n,N}^\beta)] 
    \leq   \left(1+O\left(\frac{\ln(n+2\beta+2)}{n+2\beta+1}\right)\right)  \frac{2n\vol_n(B_n)}{n+2\beta+1}  \left(\frac{d_{n,\beta}}{N}\right)^{\frac{2}{n+2\beta+1}} + e^{-C(n,\beta)N}.
\end{equation*}
By \eqref{eqn:dnbeta_ineq} we find that
\begin{equation*}
    (n d_{n,\beta})^{\frac{2}{n+2\beta+1}}\leq 1 + O\left(\frac{\ln(n+2\beta+2)}{n+2\beta+1}\right).
\end{equation*}
Also, by \eqref{gamma-bound}, we have that
\begin{equation*}
    t_{n,N,\beta}^{-n} \leq \left(1-c_2 N^{-\frac{2}{n+2\beta+1}}\right)^{-n} 
    \leq 1+ 2c_2 n N^{-\frac{2}{n+2\beta+1}} 
    \leq 1+O\left(\frac{\ln(n+2\beta+2)}{n+2\beta+1}\right)
\end{equation*}
for large enough $N$. 
Hence, for all sufficiently large $N$,
\begin{align*}
    \E[\vol_n(B_n\triangle t_{n,N,\beta}^{-1} P_{n,N}^\beta)]
    &= t_{n,N,\beta}^{-n}\E[\vol_n( t_{n,N,\beta}B_n\triangle P_{n,N}^\beta)]\\
    &\leq  \left(1 + O\left(\frac{\ln(n+2\beta+2)}{n+2\beta+1}\right)\right) \frac{2n\vol_n(B_n)}{n+2\beta+1}  N^{-\frac{2}{n+2\beta+1}},
\end{align*}
where we choose $N$ large enough so that $N^{\frac{2}{n+2\beta+1}} e^{-C(n,\beta)N} \leq \frac{2n\vol_n(B_n)}{(n+2\beta+1)^2}$. This completes the proof of Theorem \ref{beta-LSW}. 
\end{proof}

\subsection{Step 4: Going from the local estimates to the global estimate}

To conclude the proof of Theorem \ref{mainThm}, we apply Lemma \ref{weak-dist-lem} as follows. We use the upper bound from Theorem \ref{beta-LSW}, first replacing $n$ by $j$, and then selecting the parameter  $\beta=\frac{n-j-2}{2}$ which corresponds to a $j$-dimensional projection of the uniform distribution on $\mathbb{S}^{n-1}$ ($\beta=-1$). The choice of the scaling factor $r\in(0,1)$ we use in \eqref{reduction-step} is the number $r=t_{j,N,\frac{n-j-2}{2}}$. Substituting this choice of parameters into the upper bound for the expectation in Theorem \ref{beta-LSW} and using the identity  $V_j(B_n)=\tbbinom{n}{j}\vol_j(B_j)$, we obtain that for all  sufficiently large $N$,
\begin{align*}
  \E[\delta_j(t_{j,N,\frac{n-j-2}{2}}^{-1}P_{n,N}^{\rm unif},B_n)]
    &=\bbinom{n}{j}\E[\vol_j(t_{j,N,\frac{n-j-2}{2}}^{-1}P_{j,N}^{\beta=\frac{n-j-2}{2}}\triangle B_j)]\\
    &\leq \left(1 + O\left(\frac{\ln n}{n-1}\right)\right) \bbinom{n}{j}\frac{2j\vol_j(B_j)}{n-1}\NN\\
    &=\left(1 + O\left(\frac{\ln n}{n-1}\right)\right) \frac{2j}{n-1}V_j(B_n)\NN.
\end{align*}
To conclude, we use the fact that for all sufficiently large $N$,  there exists a realization $Q_{n,j,N}^*$ of the random polytope $t_{j,N,\frac{n-j-2}{2}}^{-1}P_{n,N}^{\rm unif}$ which also achieves this upper bound on the expectation. This completes the proof of Theorem \ref{mainThm}. \qed

\section*{Acknowledgments}

SH would like to thank  the Georgia Institute of Technology and the organizers of the {\it Workshop in Convexity and High-Dimensional Probability}, held in May of 2022, for their hospitality (supported by NSF CAREER DMS-1753260). During the  workshop,   significant progress on this manuscript was achieved. We are very grateful to the anonymous referee for detailed comments that helped improve the paper. 


\appendix

\section{Estimation of the constants in \texorpdfstring{\eqref{gamma-const-est}}{(24)}}\label{sec-appendix}
We consider the constant
\begin{equation*}
    \frac{A_{n,\beta}}{\omega_n} = \frac{1}{2} \, \frac{n+2\beta+1}{n+2\beta+3}\,\frac{\Gamma\left(n+1+\frac{2}{n+2\beta+1}\right)}{\Gamma(n+1)} \, d_{n,\beta}^{\frac{2}{n+2\beta+1}}
\end{equation*}
that appears in the estimates \eqref{gamma-bound} for $\gamma_{n,N,\beta}$. A classical inequality by Wendel \cite{wendel}  states that for every $x>0$ and every $s\in(0,1)$ we have $x^s \geq \frac{\Gamma(x+s)}{\Gamma(x)}$. Hence, 
\begin{equation*}
    1 \leq \frac{\Gamma\left(n+1+\frac{2}{n+2\beta+1}\right)}{\Gamma(n+1)} \leq (n+1)^{\frac{2}{n+2\beta+1}} = 1+O\left(\frac{\ln(n+1)}{n+2\beta+1}\right)
\end{equation*}
for all $n\geq 1$ and $\beta\geq -\frac{1}{2}$.

Applying Wendel's inequality again with $x=\frac{n}{2}+\beta+1$ and $s=\frac{1}{2}$, we get
\begin{align*}
    \frac{\Gamma(\frac{n+1}{2}+\beta+1)}{\Gamma(\frac{n}{2}+\beta+1)} \leq \sqrt{\frac{n+2\beta+2}{2}},
\end{align*}
which yields
\begin{align}\nonumber
    1 \leq d_{n,\beta}^{\frac{2}{n+2\beta+1}} 
    &= \left(\frac{2\sqrt{\pi} \Gamma(\frac{n+1}{2}+\beta+1)}{\Gamma(\frac{n}{2}+\beta+1)}\right)^{\frac{2}{n+2\beta+1}}\\\label{eqn:dnbeta_ineq}
    &\leq \big((2\pi) (n+2\beta+2)\big)^{\frac{1}{n+2\beta+1}}
    \leq 1 + O\left(\frac{\ln(n+2\beta+2)}{n+2\beta+1}\right).
\end{align}
Therefore, for all $n\geq 1$ and all $\beta\geq -\frac{1}{2}$ we have
\begin{equation*}
    \frac{1}{2} \left(1-\frac{C_1}{n+2\beta+3}\right) \leq \frac{A_{n,\beta}}{\omega_n} 
    \leq \frac{1}{2} \left(1+ \frac{C_2 \ln[(n+1)(n+2\beta+2)]}{n+2\beta+1}\right)
\end{equation*}
for some positive absolute constants $C_1$ and $C_2$.

\section{1-dimensional case of Lemma \ref{beta-volume}} \label{sec:appendix-B}

Since Affentranger \cite[Cor.\ 1]{Affentranger1991} only stated his results for $n\geq 2$, in the following we briefly verify the statement of Lemma \ref{beta-volume} also holds for $n=1$. 
Let $X_1,\dotsc,X_N$ be i.i.d.\ random points in $B_1=[-1,1]$ chosen according to the beta distribution with $\beta>-1$, and set $P_{1,N}^\beta:=[X_1,\dotsc,X_N]$. Then
\begin{equation*}
    \vol_1(P_{1,N}^\beta) = \max_{i\in [N]} X_i - \min_{i\in [N]} X_i.
\end{equation*}
Thus by the symmetry of the beta distribution,
\begin{align*}
    \E \vol_1(P_{1,N}^\beta) &= \E \max_{i\in[N]} X_i - \E \min_{i\in[N]} X_i = 2\E\max_{i\in[N]} X_i\\ 
    &= 2 \binom{N}{1} \int_{B_1} x \, \P_\beta([-1,x])^{N-1}\, \dint\P_\beta(x)\\
    &= 2c_{1,\beta} N \int_{0}^2 (1-s)\, F_{1,\beta}(1-s)^{N-1} (1-(1-s)^2)^\beta \,\dint s.
\end{align*}
We also have
\begin{align*}
    F_{1,\beta}(1-s) &= c_{1,\beta} \int_{-1}^{1-s} (1-z^2)^\beta \, \dint z = 1 - c_{1,\beta} \int_{0}^s (1-(1-t)^2)^\beta\,\dint t\\
    &\sim 1- \frac{c_{1,\beta} 2^\beta s^{\beta+1}}{\beta+1} \qquad \text{as $s\to 0^+$}.
\end{align*}
Using the substitution $F_{1,\beta}(1-s) = 1-\frac{t}{N}$, we find that
\begin{equation*}
    s = 1- F_{1,\beta}^{-1}\left(1-\frac{t}{N}\right) \sim \left(\frac{(\beta+1)t}{2^\beta N c_{1,\beta}}\right)^{\frac{1}{\beta+1}} \qquad \text{as $N\to \infty$}.
\end{equation*}
Hence,
\begin{align*}
    \E \vol_1(P_{1,N}^\beta) &\sim 2 \int_{0}^N \left(1-\frac{t}{N}\right)^{N-1}\, \dint t 
        - \left(\frac{2(\beta+1)}{c_{1,\beta}}\right)^{\frac{1}{\beta+1}} N^{-\frac{1}{\beta+1}} \int_{0}^N t^{\frac{1}{\beta+1}} \left(1-\frac{t}{N}\right)^{N-1}\, \dint t \\
    &\sim 2 - d_{1,\beta}^{\frac{1}{\beta+1}}\, \Gamma\!\left(1+\tfrac{1}{\beta+1}\right) N^{-\frac{1}{\beta+1}}
    = \vol_1(B_1) - A_{1,\beta}\, N^{-\frac{1}{\beta+1}} \qquad \text{as $N\to \infty$.}
\end{align*}
This proves the statement of Lemma \ref{beta-volume} for $n=1$ and $\beta>-1$.

\bibliographystyle{plain}
\bibliography{main}


\vspace{3mm}

\noindent {\sc Institute of Discrete Mathematics and Geometry, TU Wien,\\ Austria}

\noindent {\it E-mail address:} {\tt florian.besau@tuwien.ac.at}

\vspace{3mm}

\noindent {\sc Department of Mathematics \& Computer Science, Longwood University,\\ U.S.A.}

\noindent {\it E-mail address:} {\tt hoehnersd@longwood.edu}

\end{document}